\newtheorem{theorem}{Theorem}[section]
\newtheorem{lemma}{Lemma}[section]
\newdefinition{rem}{Remark}
\numberwithin{equation}{section}
\def\mi{{\bf i}}
\def\e{\varepsilon}
\begin{document}
\begin{frontmatter}
\title{Reducibility, Lyapunov exponent, pure point spectra property for quasi-periodic wave operator}

\author[SDUJ,SDUW]{Jing Li
}
 \ead{xlijing@sdu.edu.cn}
\address[SDUJ]{School of Mathematics , Shandong University, Jinan 250100, P.R. China}
\address[SDUW]{School of Mathematics and Statistics, Shandong University, Weihai 264209, P.R. China}

\begin{abstract}
In the present paper, the reducibility is derived for linear wave equation of finite smooth and time-quasi-periodic potential
 subject to Dirichlet boundary condition. Moreover, it is proved that the corresponding wave operator possesses the property of pure point spectra and zero Lyapunov exponent.
\end{abstract}

\begin{keyword}
Reducibility; Quasi-periodic wave operator; KAM theory; Finite smooth; Lyapunov exponent; Pure-point spectrum
\par
MSC:35P05; 37K55; 81Q15
\end{keyword}
\end{frontmatter}

\section{Introduction}

  If a self-adjoint differential operator with time-quasi-periodic coefficients can be reduced to one with constant coefficients, the spectrum property and  Lyapunov exponent of the operator can be easily obtained. To this end, there are many literatures to deal with Schr\"odinger operator with time-quasi-periodic potential
of the form
 \begin{equation}\label{1}\mi\, \dot u=(H_0+\e W(\omega\, t,x,-\mi \nabla))u,\; x\in\mathbb R^d\;\;\text{or}\;\; x\in\mathbb T^d=\mathbb R^d/2\pi \mathbb Z^d,\end{equation}
 where $H_0=-\triangle +V(x)$ or an abstract self-adjoint (unbounded) operator and the perturbation $W$ is quasiperiodic in time $t$ and it may or may not depend on $x$ or/and $\nabla$.
 When $x\in\mathbb R^d$, there are many interesting and important results. See \cite{Bambusi17arxiv, Bambusi17CMP, Combescure1987Ann, Duclos-Stovicek96, Duclos-Stovicek02, Wang17nonlinearity}, and the references therein.
 When $x\in\mathbb T^d$ with any integer $d\ge 1$,  it is  in \cite{eliasson-kuksin},  proved that
\begin{equation}\label{2}\dot u=\mi (-\triangle+\e W(\phi_0+\omega \, t,x;\omega)u),\;\; x\in\mathbb T^d \end{equation}
is reduced to an autonomous equation
for most values of the frequency vector $\omega$, where $W$ is analytic in $(t,x)$ and quasiperiodic in time $t$ with frequency vector $\omega$. The reduction is made by means
of T\"oplitz-Lipschitz property of operator and very hard KAM technique. The basic difficult is in that the frequencies of the unperturbed operator $-\triangle$, denoted by $\lambda_k$ ($k\in\mathbb Z$), have multiplicity
 \[\lambda_k^{\sharp}\simeq |k|^{d-1}\to\infty,\quad \text{as}\;\; |k|\to\infty, \; \text{if}\;d>1.\]
 Fortunately, the frequencies have a good separate property and
 \[|\lambda_k-\lambda_{k^\prime}|\ge 1, \; \text{when}\; \lambda_k\neq \lambda_{k^\prime}.\]

\ \

When considering the reducibility for a linear wave equation

\begin{equation}\label{2w} u_{tt}=-\triangle+\e V(\phi_0+\omega t,x;\omega)u,\;\; x\in\mathbb T^d,\end{equation}
the unperturbed operator is $\sqrt{-\triangle}$ by writing \eqref{2w} as a system of order $1$. At this time, a  serious difficulty is that the frequencies of  $\sqrt{-\triangle}$  , still denoted by $\lambda_k$ ($k\in\mathbb Z$), have no good separate property and
 $|\lambda_k-\lambda_{k^\prime}|$ is dense, at least, in some interval of $\mathbb R$, when $d>1$. Thus, the reducibility for \eqref{2w} with $d>1$ is a challenging open problem. See \cite{Riccardo Montalto} for recent progress.
  However, the reducibility for \eqref{2w} with $d=1$ can be derived from the earlier KAM theorem ( \cite{Kuk1} and \cite{Poschel1996}) for nonlinear partial differential equations, assuming $V$ is analytic in $(t,x)$. Also see, \cite{Liang17JDDE}.
As we know, the spectrum property depends heavily on the smoothness of the perturbation for the discrete Schr\"odinger operator. For example, the Anderson localization and positivity of the Lyapunov exponent for one frequency discrete quasi-period Schr\"odinger operator with analytic potential occur in non-perturbative sense ( the largeness of the potential does not depend on the Diophantine condition. See \cite{Bourgain-Goldstin2000}, for the detail). However, one can only get perturbative results when the analytic property of  the potential is weaken to Gevrey regularity (see \cite{Klein2005}). Thus, a natural problem is that what happens when the perturbation $V$ is of finite smoothness in $(t,x)$.

\ \

Let us consider a linear wave equation with quasi-periodic coefficient:
\begin{equation}\label{eq1}
 u_{tt}-u_{xx}+\varepsilon V(\omega \, t,x)u=0
\end{equation}
subject to the boundary condition
\begin{equation}\label{eq2}
u(t,-\pi)=u(t,\pi)=0.
\end{equation}

{\bf Assumption A.} {\it Assume
$V$ is a $C^N$-smooth and  quasi-periodic in time $t$ with frequency $\omega\in \mathbb R^n$: that is, there is a hull function $\mathcal{V}(\theta,x)\in {C}^{N}(\mathbb{T}^{n}\times [-\pi, \pi], \mathbb{R})$
such that
$$ V(\omega\, t,x)=\left.\mathcal{V}(\theta, x)\right|_{\theta=\omega t},
\;\;\mathbb{T}^{n}=\mathbb{R}^{n}/2\pi \mathbb{Z}^{n}$$
where $N>200\, n$.}

\ \

{\bf Assumption B.} {\it We assume also that $V$ is  an even function of $x$, with zero-average:
$$\int_{-\pi}^{\pi}V(\omega\, t,x)dx\equiv 0.$$}

\ \

{\bf Assumption C.} {\it
Assume $\omega=\tau \omega_{0},$ where $\omega_{0}$ is Diophantine:

\begin{equation}\label{eq3}
|\langle k, \omega_{0} \rangle |\geq \frac{\gamma}{|k|^{n+1}},\;\;k\in \mathbb{Z}^{n}\setminus \{0\}
\end{equation}
with $0<\gamma\ll 1,$ a constant, and $\tau \in [1,2]$ is a parameter.}

\ \

Let $w=u_t$. Endow $L^2[-\pi,\pi]\times L^2[-\pi,\pi]$ a symplectic $ dw \wedge d u$.
Take  $(L^2[-\pi,\pi]\times L^2[-\pi,\pi]$, $dw \wedge d u)$ as phase space. Then \eqref{eq1} is a hamiltonian system with hamiltonian functional
\[H(u,w)=\int_{-\pi}^{\pi}\left(\frac12(w^2+ u_x^2)+\frac12 \e V(\omega t,x)u^2\right)\, d\, x,\]
and the Hamiltonian equation
\[u_t=\frac{\delta\, H}{\delta\, w},\quad w_t=-\frac{\delta\, H}{\delta u}.\]
\begin{theorem}\label{thm1.1} With the {\bf Assumptions A, B, C}, for given $1\gg\gamma>0$,
 there exists $\epsilon^*$ with $ \;0<\varepsilon^{*}=\varepsilon^{*} (n, \gamma)\ll \gamma,$
and exists a subset $\Pi\subset [1, 2]$ with
$$\mbox{Measure}\, \Pi\geq 1-O (\gamma^{1/3})$$
such that for any $0<\varepsilon<\varepsilon^{*}$ and for any $\tau \in \Pi,$ there is a quasi-periodic  symplectic change $u=\Phi(\omega \, t, x) v$ which changes
 \eqref{eq1} subject to \eqref{eq2}  into
\begin{equation}\label{yuan1.7} v_{tt}-v_{xx}+\varepsilon M_\xi v=0,\;\; v(t,-\pi)=v(t,\pi)=0,
\end{equation}
where $M_\xi$ is a real Fourier multiplier:
\[M_\xi \, \sin\, k x=\xi_k \sin\, k x,\; k\in \mathbb N,\]
with constants $\xi_k\in\mathbb R$ and $|\xi_k|\leq C /|k|$, where $C$ is absolute constant. Moreover, the wave operator
\[\mathcal L \, u(t,x)=(\partial_t^2-\partial_x^2+\varepsilon V(\omega\, t,x))\, u(t,x),\quad u(t,-\pi)=u(t,\pi)=0 \]
is of pure point spectrum property and of zero Lyapunov exponent.
\end{theorem}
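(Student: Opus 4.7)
The plan is to recast \eqref{eq1}--\eqref{eq2} as an infinite-dimensional Hamiltonian system with a quadratic, quasi-periodic-in-time perturbation and then run a KAM reducibility scheme. Expanding $u=\sum_{k\geq 1}q_k\sin kx$, $w=\sum_{k\geq 1}p_k\sin kx$ puts the Hamiltonian in the form
\[
H=\tfrac12\sum_k\bigl(p_k^2+k^2 q_k^2\bigr)+\tfrac{\varepsilon}{2}\sum_{k,l}V_{kl}(\omega t)\,q_k q_l .
\]
After the symplectic substitution $z_k=(k q_k+\mi p_k)/\sqrt{2k}$, the unperturbed part becomes $\sum_k k\,|z_k|^2$, a diagonal normal form whose frequencies $\lambda_k=k$ are simple and uniformly separated: $|\lambda_k-\lambda_l|\geq 1$ for $k\neq l$. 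Assumption B ensures that the sine basis is invariant under the perturbation and that no constant frequency shift needs to be tracked from the spatial average.

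At each step of the KAM iteration one kills the $\theta$-dependent part of the current quadratic perturbation by solving a homological equation
\[
\omega\cdot\partial_\theta F+\{N_\nu,F\}=Q_\nu-[Q_\nu],
\]
whose inversion requires lower bounds on $|\langle k,\omega\rangle|$ and on the second Melnikov denominators $|\langle k,\omega\rangle+j\pm l|$ for all relevant triples $(k,j,l)\in\mathbb Z^n\setminus\{0\}\times\mathbb N\times\mathbb N$. The "$+$" divisors are bounded below by the integer $j+l\geq 2$ as soon as $|\langle k,\omega\rangle|$ is small, and the "$-$" divisors reduce to $|\langle k,\omega\rangle+m|$ with $m\in\mathbb Z$, hence are controlled directly by Assumption C after excising a Cantor-type set of $\tau$-values. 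Summing these measure losses over all KAM steps should yield the claimed bound $|[1,2]\setminus\Pi|=O(\gamma^{1/3})$.

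The main obstacle, and the reason for the hypothesis $N>200n$, is that $V$ is only $C^N$ whereas a classical KAM scheme converges in the analytic category. I would rely on the analytic-smoothing strategy of Moser--Jackson--Zehnder: approximate the hull $\mathcal V$ by a sequence of analytic functions $\mathcal V^{(\nu)}$ on shrinking complex strips $|\mathrm{Im}\,\theta|<s_\nu$, with $s_\nu\downarrow 0$ at a prescribed rate and
\[
\|\mathcal V-\mathcal V^{(\nu)}\|_{s_\nu}\leq C\, s_\nu^{N},
\]
and run the $\nu$-th KAM step with $\mathcal V^{(\nu)}$ in place of $\mathcal V$. The Diophantine loss of analyticity per step behaves like $s_\nu^{-\sigma}$ with $\sigma$ of order $n$, so the exponent $N>200n$ is calibrated precisely so that the approximation error $s_\nu^N$ dominates all such losses and quadratic convergence survives at finite smoothness. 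This bookkeeping, together with tame estimates on the matrix elements $V_{kl}(\theta)$, is the delicate heart of the proof; everything else is either standard KAM or a measure-theoretic summation.

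Once the limiting symplectic change $u=\Phi(\omega t,x)\,v$ has been constructed, the reduced equation \eqref{yuan1.7} splits in each mode into $\ddot v_k+(k^2+\varepsilon\xi_k)v_k=0$, whose solutions are bounded and purely oscillatory; the bound $|\xi_k|\leq C/|k|$ follows from the $\lambda_k^{-1}$ weights built into the complex coordinates together with the uniform smallness of the accumulated normal-form corrections. Pulling these modes back through the bounded operator $\Phi$ produces a complete orthonormal family of Floquet-type eigenfunctions for the operator $\mathcal L$, which is exactly the pure point spectrum property, while the time-boundedness of each Floquet solution immediately yields zero Lyapunov exponent.
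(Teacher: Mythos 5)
Your proposal follows essentially the same route as the paper: sine-series expansion and complexification into a diagonal normal form $\sum_k k|z_k|^2$ plus a quadratic quasi-periodic perturbation, a Jackson--Moser--Zehnder analytic smoothing of the $C^N$ data on shrinking strips $s_\nu\downarrow 0$ to feed each KAM step, homological equations solved under second Melnikov conditions with Cantor-type excision in $\tau$, summation of the measure losses to $O(\gamma^{1/3})$, and the same deduction of pure point spectrum and zero Lyapunov exponent from the reduced mode equations. The only cosmetic difference is that the paper smooths the operator-valued coefficients $R^{p,q}(\theta)$ directly (yielding the telescoping decomposition $R^{p,q}=R^{p,q}_0+\sum_l R^{p,q}_l$) rather than the hull function $\mathcal V$ itself, which is an equivalent bookkeeping.
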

\begin{rem} Noting that the operator $\mathcal L $ is self-adjoint in $L^2$ and using a technique in \cite{BG}, we also make the  coordinate change  $\Psi$ be unitary.
\end{rem}
\begin{rem} In \cite{Massimiliano-Berti}, it is proved that there is a quasi-periodic solution for $d$-dimensional nonlinear wave equation a quasi-periodic in time nonlinearity like
\[u_{tt}-\Delta u-V(x)\, u=\varepsilon f(\omega \, t, x, u),\;\; x\in \mathbb T^d,\]
where the multiplicative potential $V$ is in $C^q(\mathbb T^d;\, \mathbb R),\; \omega\in\mathbb R^n$ is a non-resonant frequency vector and $f\in C^q(\mathbb T^n\times \mathbb T^d\times\mathbb R;\, \mathbb R)$.
Because of the application of Nash-Moser iteration, it is not clear whether the obtained quasi-periodic solution is linear stable and has zero Lyapunov exponent. As a Corollary  of Theorem \ref{thm1.1},  we can prove that the quasi-periodic solution by \cite{Massimiliano-Berti} is linear stable and has zero Lyapunov exponent, when $d=1$.

\end{rem}

-----------------------------------------------------------------------------------

\section{Passing to Fourier coefficients}

Consider the differential equation:
\begin{equation}\label{eq4}
\mathcal{L}u=u_{tt}-u_{xx}+\varepsilon V(\omega t, x)u=0
\end{equation}
subject to the boundary condition

\begin{equation}\label{eq5}
u(t,-\pi)=u(t, \pi)=0.
\end{equation}
It is well-known that the Sturm-Liouville problem

\begin{equation}\label{eq6}
-y''=\lambda y,
\end{equation}
with the boundary condition

\begin{equation}\label{eq7}
y(-\pi)=y(\pi)=0
\end{equation}
has the eigenvalues and eigenfunctions, respectively,

\begin{equation}\label{eq8}
\lambda_{k}=k^{2},\;\;k=1, 2, \ldots,
\end{equation}
\begin{equation}\label{eq9}
\phi_{k}(x)=\sin kx,\;\;k=1, 2, \ldots .
\end{equation}
Make the ansatz
\begin{equation}\label{eq10}
u(t, x)=\mathcal{S} (u_k)=\sum_{k=1}^{\infty} u_{k}(t)\phi_{k}(x).
\end{equation}
Note that $V$ is an even function of $x$ such that
$\int_{-\pi}^{\pi}V(\omega\, t,x)dx\equiv 0.$ Write
\begin{equation}\label{eq11}
V(\omega t, x)=\sum_{k=1}^{\infty} v_{k}(\omega t)\varphi_{k}(x),
\end{equation}
where {$$\varphi_{k}(x)=\cos kx,\;\;k=1, 2, \ldots .$$}
Let
\begin{equation}\label{eq12}
\frac{du_{k}}{dt}=w_{k}.
\end{equation}
By the fact that
$$\varphi_{j}\phi_{l}=\sum_{k=1}^{\infty}\langle \varphi_{j}\phi_{l}, \phi_{k}\rangle \phi_{k},\;
j, l=1, 2, \cdots,$$
then \eqref{eq4} can be expressed as
\begin{equation*}
\sum_{k=1}^{\infty}\left(\frac{dw_{k}}{dt}+\lambda_{k}u_{k}+\varepsilon \sum_{l=1}^{\infty}\sum_{j=1}^{\infty}c_{jlk}v_{j}u_{l}\right)\phi_{k}=0,
\end{equation*}
which implies that
\begin{equation}\label{eq13}
\frac{dw_{k}}{dt}=-\lambda_{k}u_{k}-\varepsilon \sum_{l=1}^{\infty}\sum_{j=1}^{\infty}c_{jlk}v_{j}u_{l},
\end{equation}
where
\begin{equation}\label{eq2.11-}
c_{jlk}=\langle \varphi_{j}\phi_{l}, \phi_{k}\rangle
=\int_{-\pi}^{\pi}\cos jx\cdot\sin lx\cdot\sin kx\, dx=
\left\{%
\begin{array}{ll}
 \;\;0,\;k\neq \pm l\pm j,\\
\;\;\frac{\pi}{2},\; k= l\pm j \geq 1, \\
 -\frac{\pi}{2},\; k=-l\pm j\geq 1.\\
\end{array}%
\right.
\end{equation}
Rescale
\begin{equation}\label{eq14}
\mathcal{T}:\quad \left\{%
\begin{array}{cl}
w_{k}=\sqrt[4]{\lambda_{k}} p_{k},\\
\\
u_{k}=\frac{1}{\sqrt[4]{\lambda_{k}}}q_{k}.\\
\end{array}%
\right.
\end{equation}
Then

\begin{equation}\label{eq15}
\left\{%
\begin{array}{cl}
&\dot{q}_{k}=\sqrt{\lambda_{k}} p_{k},\\
\\
&\dot{p}_{k}=-\sqrt{\lambda_{k}}\,q_{k}-\varepsilon \sum_{l=1}^{\infty}\sum_{j=1}^{\infty}c_{jlk}\frac{v_{j}(\theta)}{\sqrt[4]{\lambda_{k}\lambda_{l}}}\,q_{l}.\\
\end{array}%
\right.
\end{equation}
This is a linear Hamiltonian system
\begin{equation}\label{eq16}
\left\{%
\begin{array}{cl}
&\dot{q}_{k}=\frac{\partial H}{\partial p_{k}},\\
\\
&\dot{p}_{k}=-\frac{\partial H}{\partial q_{k}},\\
\end{array}%
\right.
\end{equation}
where the symplectic structure is $d p\wedge d q=\sum_{j=1}^{\infty} d p_{j} \wedge d q_{j}$ and
the Hamiltonian function is
\begin{equation}\label{eq17}
H(p,q)=\sum_{k=1}^{\infty}\frac{\sqrt{\lambda_{k}}(p_{k}^{2}
+q_{k}^{2})}{2}+\varepsilon\sum_{k=1}^{\infty}\sum_{l=1}^{\infty}\sum_{j=1}^{\infty}
c_{jlk}\frac{v_{j}(\theta)}{\sqrt[4]{\lambda_{k}\lambda_{l}}}\,q_{l}q_{k},\quad \theta=\omega\, t.
\end{equation}
Introduce complex variables:
\begin{equation}\label{eq18}
z_{j}=\frac{1}{\sqrt{2}}(q_{j}-\sqrt{-1}p_{j}),\;\;\overline{z}_{j}=\frac{1}{\sqrt{2}}(q_{j}+\sqrt{-1}p_{j}),
\end{equation}
which is a symplectic transformation with $d p\wedge d q=\sqrt{-1} d z\wedge d \overline{z}$. Thus \eqref{eq16} is changed into
\begin{equation}\label{eq19}
\mathcal{G}:\quad \left\{%
\begin{array}{cl}
&\dot{z}_{k}=\sqrt{-1}\,\frac{\partial H}{\partial \overline{z}_{k}},\\
\\
&\dot{\overline{z}}_{k}=-\sqrt{-1}\,\frac{\partial H}{\partial z_{k}},\\
\end{array}%
\right.
\end{equation}
where
\begin{equation}\label{eq20}
H(z,\overline{z})=\sum_{k=1}^{\infty}\sqrt{\lambda_{k}}z_{k}
\overline{z}_{k}+\varepsilon\sum_{k=1}^{\infty}\sum_{l=1}^{\infty}\sum_{j=1}^{\infty}
c_{jlk}\frac{v_{j}(\theta)}{\sqrt[4]{\lambda_{k}\lambda_{l}}}
\left(\frac{z_{l}+\overline{z}_{l}}{\sqrt{2}}\right)\left(\frac{z_{k}+\overline{z}_{k}}{\sqrt{2}}\right).
\end{equation}
For two sequences $x=(x_{j}\in \mathbb{C},\;j=1, 2,\ldots)$, $y=(y_{j}\in \mathbb{C},\;j=1, 2,\ldots),$
define
$$\langle x, y\rangle=\sum_{j=1}^{\infty}x_{j}y_{j}.$$
Then we can write
\begin{equation}\label{eq21}
H=\langle\Lambda z,\overline{z}\rangle+\varepsilon \left[\langle R^{zz}(\theta)z, z\rangle
+\langle R^{z\overline{z}}(\theta)z, \overline{z}\rangle+\langle R^{\overline{z}\overline{z}}(\theta)\overline{z}, \overline{z}\rangle\right],
\end{equation}
where
\begin{equation*}
\Lambda=diag \left(\sqrt{\lambda_{j}}: j=1,2,\ldots\right),\;\;\theta=\omega t,
\end{equation*}
\begin{equation}\label{eq22.1}
R^{zz}(\theta)=\left(R^{zz}_{kl}(\theta): k, l=1,2,\ldots\right),
\;\; R^{zz}_{kl}(\theta)=\frac{1}{2}\sum_{j=1}^{\infty}\frac{c_{jlk}v_{j}(\theta)}{\sqrt[4]{\lambda_{k}}\sqrt[4]{\lambda_{l}}},
\end{equation}
\begin{equation}\label{eq22.2} R^{z\overline{z}}(\theta)=\left(R^{z\overline{z}}_{kl}(\theta): k, l=1,2,\ldots\right),
\;\; R^{z\overline{z}}_{kl}(\theta)=\sum_{j=1}^{\infty}\frac{c_{jlk}v_{j}(\theta)}{\sqrt[4]{\lambda_{k}}\sqrt[4]{\lambda_{l}}},
\end{equation}
\begin{equation}\label{eq22.3} R^{\overline{z}\,\overline{z}}(\theta)=\left(R^{\overline{z}\,\overline{z}}_{kl}(\theta): k, l=1,2,\ldots\right),
\;\; R^{\overline{z}\,\overline{z}}_{kl}(\theta)=\frac{1}{2}\sum_{j=1}^{\infty}
\frac{c_{jlk}v_{j}(\theta)}{\sqrt[4]{\lambda_{k}}\sqrt[4]{\lambda_{l}}}.
\end{equation}
Define a Hilbert space $h_{N}$ as follows:
\begin{equation}\label{eq23}
h_{N}=\{z=(z_{k}\in\mathbb{C}:k=1,2,\ldots)\}.
\end{equation}
Let $$\langle y, z\rangle_{N}:=\sum_{k=1}^{\infty}k^{2N}y_{k}\overline{z}_{k},\;\;\forall y, z\in h_{N}.$$
\begin{equation}\label{eq24}
\|z\|_{N}^{2}=\langle z, z\rangle_{N}.
\end{equation}
Recall that $$\mathcal{V}(\theta,x)\in {C}^{N}(\mathbb{T}^{n}\times [-\pi,\pi], \mathbb{R}).$$
Note that the Fourier transformation \eqref{eq10} is isometric from $u\in\mathcal{H}^{N}[-\pi, \pi]$
to $(u_{k}: k=1, 2, \ldots)\in h_{N},$ where $\mathcal{H}^{N}[-\pi, \pi]$ is the usual Sobolev space.
By \eqref{eq22.1}, \eqref{eq22.2} and \eqref{eq22.3}, we have that
\begin{eqnarray}\label{eq25}
&&\sup_{\theta\in\mathbb{T}^{n}}\|\sum_{|\alpha|= N}\partial^{\alpha}_{\theta}J R^{zz}(\theta) J\|_{h_N\to h_N}\leq C,\nonumber\\
&& \sup_{\theta\in\mathbb{T}^{n}}\|\sum_{|\alpha|= N}\partial^{\alpha}_{\theta}J R^{z\overline{z}}(\theta) J\|_{h_N\to h_N}\leq C,\\
&& \sup_{\theta\in\mathbb{T}^{n}}\|\sum_{|\alpha|= N}\partial^{\alpha}_{\theta}J R^{\overline{z}\,\overline{z}}(\theta) J\|_{h_N\to h_N}\leq C\nonumber,
\end{eqnarray}
where $ ||\cdot||_{h_N\to h_N}$ is the operator norm from $h_N$ to $h_N$, and $\alpha=(\alpha_{1}, \alpha_{2}, \ldots , \alpha_{n}),$ $|\alpha|=|\alpha_{1}|+|\alpha_{2}|+\ldots+|\alpha_{n}|,$ $\alpha_{j}$'s
 are positive integers, and $J=diag (\sqrt[4]{\lambda_{j}}: j=1, 2, \ldots)=diag (\sqrt{j}: j=1, 2, \ldots).$

Actually,
$$\partial_{\theta}^{\alpha}JR^{zz}(\theta)J=\left(\frac{1}{2}\sum_{j=1}^{\infty}
C_{jlk}\partial_{\theta}^{\alpha}v_{j}(\theta): l, k=1, 2, \cdots\right).$$
For any $z=(z_{k}\in \mathbb{C}: k=1, 2, \cdots)\in h_{N},$
$$\left(\sum_{|\alpha|=N}\partial_{\theta}^{\alpha}JR^{zz}(\theta)J\right)z=\left(\frac{1}{2}\sum_{j=1}^{\infty}
\sum_{k=1}^{\infty}C_{jlk}(\sum_{|\alpha|=N}\partial_{\theta}^{\alpha}v_{j}(\theta))z_{k}: l=1, 2, \cdots\right).$$
Let $$\gamma_{\,lj}=\frac{(\pm l\pm j)j}{l},\;\;\mbox{where}\;\;(\pm l\pm j)jl\neq 0.$$
Thus,
\begin{eqnarray*}\label{eq2.25}
&&\left\|\left(\sum_{|\alpha|=N}\partial_{\theta}^{\alpha}JR^{zz}(\theta)J\right)z\right\|_{N}^{2}\\
&=&\sum_{l=1}^{\infty}l^{2N}\left| \frac{1}{2}\sum_{j=1}^{\infty}
\sum_{k=1}^{\infty}C_{jlk}(\sum_{|\alpha|=N}\partial_{\theta}^{\alpha}v_{j}(\theta))z_{k}\right|^{2}\\
&=&\sum_{l=1}^{\infty}l^{2N}\left|\frac{1}{2}\sum_{j=1}^{\infty} C_{jl(\pm l\pm j)}(\sum_{|\alpha|=N}\partial_{\theta}^{\alpha}v_{j}(\theta))z_{\pm l\pm j}\right|^{2}\\
&=&\frac{1}{4}\sum_{l=1}^{\infty}l^{2N}\left|\frac{1}{\gamma_{\,lj}^{N}}\cdot \gamma_{\,lj}^{N}\sum_{j=1}^{\infty} C_{jl(\pm l\pm j)}(\sum_{|\alpha|=N}\partial_{\theta}^{\alpha}v_{j}(\theta))z_{\pm l\pm j}\right|^{2}\\
&\leq & C\left(\sum_{j=1}^{\infty}\frac{1}{\gamma_{\,lj}^{2N}}\right)\left(\sum_{j=1}^{\infty}|j|^{2N}\Big|\sum_{|\alpha|=N}\partial_{\theta}^{\alpha}v_{j}(\theta)
\Big|^{2}\sum_{l=1}^{\infty}|\pm l\pm j|^{2N}|z_{\pm l\pm j}|^{2}\right)\\
&\leq &C\sum_{j=1}^{\infty}|j|^{2N}\Big|\sum_{|\alpha|=N}\partial_{\theta}^{\alpha}v_{j}(\theta)\Big|^{2}\|z\|_{N}^{2}\\
&\leq &C\sup_{(\theta,x)\in \mathbb{T}^{n}\times [-\pi, \pi]}
\big|\sum_{|\alpha|=N}\partial_{\theta}^{\alpha}\partial_{x}^{N}\mathcal{V}(\theta, x)\big|\|z\|_{N}^{2}\leq C \|z\|_{N}^{2},
\end{eqnarray*}
where $C$ is a universal constant which might be different in different places.  It follows
\begin{equation}\label{eq2.26}
\sup_{\theta\in\mathbb{T}^{n}}\|\sum_{|\alpha|= N}\partial^{\alpha}_{\theta}J R^{z\overline{z}}(\theta) J\|_{h_N\to h_N}\leq C.
\end{equation}
The proofs of the last two inequalities in \eqref{eq25} are similar to that of \eqref{eq2.26}.
\section{Analytical Approximation Lemma}
We need to find a series of operators which are analytic in the some complex strip domains to approximate the operators
 $R^{zz}(\theta), R^{z\overline{z}}(\theta)$ and $R^{\overline{z}\,\overline{z}}(\theta)$. To this end, we cite an approximation lemma.
 See \cite{Salamon1989} and \cite{Salamon2004}. This method is used in \cite{yuan-zhang}, too.

We start by recalling some definitions and setting some new notations. Assume $X$ is a Banach space with the norm
$||\cdot||_{X}$. First recall that $C^{\mu}(\mathbb{R}^{n}; X)$ for $0< \mu <1$ denotes the space of bounded
H\"{o}lder continuous functions $f: \mathbb{R}^{n}\mapsto X$ with the form
$$\|f\|_{C^{\mu}, X}=\sup_{0<|x-y|<1}\frac{\|f(x)-f(y)\|_{X}}{|x-y|^{\mu}}+\sup_{x\in \mathbb{R}^{n}}\|f(x)\|_{X}.$$
If $\mu=0$ then $\|f\|_{C^{\mu},X}$ denotes the sup-norm. For $\ell=k+\mu$ with $k\in \mathbb{N}$ and $0\leq \mu <1,$
we denote by $C^{\ell}(\mathbb{R}^{n};X)$ the space of functions $f:\mathbb{R}^{n}\mapsto X$ with H\"{o}lder continuous partial derivatives, i.e., $\partial ^{\alpha}f\in C^{\mu}(\mathbb{R}^{n}; X_{\alpha})$ for all muti-indices $\alpha=(\alpha_{1}, \cdots, \alpha_{n})\in \mathbb{N}^{n}$ with the assumption that
$|\alpha|:=|\alpha_{1}|+\cdots+|\alpha_{n}|\leq k$ and $X_{\alpha}$ is the Banach space of bounded operators
$T:\prod^{|\alpha|}(\mathbb{R}^{n})\mapsto X$ with the norm
$$\|T\|_{X_{\alpha}}=\sup \{||T(u_{1}, u_{2}, \cdots, u_{|\alpha|})||_{X}:\|u_{i}\|=1, \;1\leq i \leq |\alpha|\}.$$
We define the norm
$$||f||_{C^{\ell}}=\sup_{|\alpha|\leq \ell}||\partial ^{\alpha}f||_{C^{\mu}, X_{\alpha}}$$
\begin{lemma}(Jackson)\label{Jackson}
Let $f\in C^{\ell}(\mathbb{R}^{n}; X)$ for some $\ell>0$ with finite $C^{\ell}$ norm over $\mathbb{R}^{n}.$
Let $\phi$ be a radical-symmetric, $C^{\infty}$ function, having as support the closure of the unit ball centered at the origin, where $\phi$ is completely flat and takes value 1, let $K=\widehat{\phi}$ be its Fourier transform. For all $\sigma >0$ define
$$ f_{\sigma}(x):=K_{\sigma}\ast f=\frac{1}{\sigma^{n}}\int_{\mathbb{R}^{n}}K(\frac{x-y}{\sigma})f(y)dy.$$
Then there exists a constant $C\geq 1$ depending only on $\ell$ and $n$ such that the following holds: For any $\sigma >0,$ the function $f_{\sigma}(x)$ is a real-analytic function from $\mathbb{C}^{n}/(\pi \mathbb{Z})^{n}$ to $X$ such that if $\Delta_{\sigma}^{n}$ denotes the $n$-dimensional complex strip of width $\sigma,$
$$\Delta_{\sigma}^{n}:=\{x\in \mathbb{C}^{n}\big ||\mathrm{Im} x_{j}|\leq \sigma,\;1\leq j\leq n\},$$
then for $\forall \alpha\in\mathbb{N}^{n}$ such that $|\alpha|\leq \ell$ one has

\begin{equation}\label{cite3.1}
\sup_{x\in \Delta_{\sigma}^{n}}||\partial ^{\alpha}f_{\sigma}(x)
-\sum_{|\beta|\leq \ell-|\alpha|}\frac{\partial^{\beta+\alpha}f(\mathrm{Re}x)}{\beta !}(\sqrt{-1}\mathrm{Im}x)^{\beta}||_{X_{\alpha}}\leq C ||f||_{C^{\ell}}\sigma^{\ell-|\alpha|},
\end{equation}

and for all $0\leq s\leq \sigma,$
\begin{equation}\label{cite3.2}
\sup_{x\in \Delta_{s}^{n}}\|\partial^{\alpha} f_{\sigma}(x)-\partial^{\alpha}f_{s}(x)\|_{X_{\alpha}}
\leq C ||f||_{C^{\ell}}\sigma^{\ell-|\alpha|}.
\end{equation}

The function $f_{\sigma}$ preserves periodicity (i.e., if $f$ is T-periodic in any of its variable $x_{j}$, so is $f_{\sigma}$). Finally, if $f$ depends on some parameter $\xi\in \Pi\subset\mathbb{R}^{n}$ and
 if $$||f(x,\xi)||_{C^{\ell}(X)}^{\mathcal{ L}}:=\sup_{\xi\in\Pi}||\partial_{\xi}\,f(x,\xi)|
|_{C^{\ell}(X)}$$ are uniformly bounded by a constant $C$
then all the above estimates hold with $\|\cdot\|$ replaced by $\|\cdot\|^{\mathcal{L}}.$
\end{lemma}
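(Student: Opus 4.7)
The plan is to exploit two structural features of the kernel $K = \widehat{\phi}$. First, since $\phi$ is $C^\infty$ with support in the closed unit ball, the Paley--Wiener theorem guarantees that $K$ extends to an entire function on $\mathbb{C}^n$ obeying bounds $|K(z)| \le C_M(1+|z|)^{-M} e^{|\mathrm{Im}\,z|}$ for every $M\in\mathbb{N}$. Second, since $\phi$ is identically $1$ in a neighborhood of the origin, all partial derivatives of $\phi$ vanish at $0$ except the zeroth, and Fourier duality translates this into the moment identities $\int_{\mathbb{R}^n} K(z)\,dz = 1$ and $\int_{\mathbb{R}^n} z^\beta K(z)\,dz = 0$ for every multi-index with $|\beta|\ge 1$. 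The first property immediately yields analyticity of $f_\sigma$ on $\Delta^n_\sigma$: for $x\in\Delta^n_\sigma$ and real $y$, the argument $(x-y)/\sigma$ lies in a strip of width one, so $|K_\sigma(x-y)|$ is bounded by an integrable function of $y$ uniformly in $x$, and differentiation under the integral sign produces holomorphy in each $x_j$. Periodicity is inherited from $f$ because convolution commutes with translations.

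For the main estimate \eqref{cite3.1}, I would set $a=\mathrm{Re}\,x$, $b=\mathrm{Im}\,x$ with $|b|\le\sigma$, and first move the $x$-derivatives onto $f$ through integration by parts: $\partial^\alpha f_\sigma(x) = \int K_\sigma(x-y)\,\partial^\alpha f(y)\,dy$. Next, Taylor-expanding $\partial^\alpha f$ around $a$ up to integer order $\lfloor \ell-|\alpha|\rfloor$ yields a polynomial part plus a H\"older remainder $R(y,a)$ satisfying $|R(y,a)|\le C\|f\|_{C^\ell}|y-a|^{\ell-|\alpha|}$. For each multi-index $\beta$ appearing in the polynomial part, a contour shift from $\mathbb{R}^n$ to $\mathbb{R}^n + ib/\sigma$ (justified by the Paley--Wiener decay) combined with the moment identities collapses $\int K_\sigma(x-y)(y-a)^\beta\,dy$ to exactly $(ib)^\beta$, so the polynomial contribution reproduces term-by-term the Taylor sum appearing on the left side of \eqref{cite3.1}. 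The remainder contribution is bounded after the substitution $z_r=(a-y)/\sigma$ by
\[
C\,\|f\|_{C^\ell}\,\sigma^{\ell-|\alpha|}\int_{\mathbb{R}^n}|K(z_r+ib/\sigma)|\,(1+|z_r|)^{\ell-|\alpha|}\,dz_r,
\]
which is finite and uniform in $|b|\le\sigma$, again by Paley--Wiener decay.

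The estimate \eqref{cite3.2} is essentially a corollary: applying \eqref{cite3.1} to $f_\sigma$ and to $f_s$ at the same point $x\in\Delta_s^n$ yields two inequalities whose Taylor polynomials \emph{coincide}, and subtracting them gives $\|\partial^\alpha f_\sigma(x) - \partial^\alpha f_s(x)\|_{X_\alpha} \le C\|f\|_{C^\ell}(\sigma^{\ell-|\alpha|}+s^{\ell-|\alpha|})$, which is absorbed into $2C\|f\|_{C^\ell}\sigma^{\ell-|\alpha|}$ since $s\le\sigma$. For the parameter-dependent version, every step above is linear in $f$, so applying $\partial_\xi$ under the convolution integral and taking the supremum over $\xi\in\Pi$ replaces $\|\cdot\|_{C^\ell}$ by $\|\cdot\|_{C^\ell}^{\mathcal{L}}$ with the same constants.

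The main obstacle I anticipate is the careful handling of the Taylor remainder when $\ell$ is non-integer: the standard Lagrange form does not apply, and one must instead truncate at integer order $\lfloor\ell\rfloor$ and convert the resulting discrepancy into the top-order H\"older modulus of $f$. Once that remainder estimate is in hand, the moment-annihilation property of $K$ cleanly explains why the mollifier $K_\sigma$ acts as the identity on polynomials of degree up to $\lfloor\ell-|\alpha|\rfloor$, and the Paley--Wiener decay guarantees absolute convergence of every integral on the complex strip $\Delta_\sigma^n$.
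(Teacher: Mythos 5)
Your argument is correct, and it is essentially the proof the paper implicitly relies on: the paper does not prove Lemma \ref{Jackson} itself but cites Salamon--Zehnder, where the same scheme is used — Paley--Wiener bounds for $K=\widehat{\phi}$ on complex strips, the moment identities $\int K=1$, $\int z^{\beta}K=0$ coming from $\phi$ being flat and equal to $1$ at the origin, Taylor expansion with a H\"older remainder for non-integer $\ell$, and a contour shift to produce the $(\sqrt{-1}\,\mathrm{Im}\,x)^{\beta}$ terms, with \eqref{cite3.2} and the parameter-dependent statement obtained exactly as you describe. No gaps worth flagging beyond the routine justification of differentiating under the integral and the $s=0$ endpoint, both of which follow from the same kernel bounds.
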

This lemma is very similar to the approximation theory obtained by Jackson, the only difference
is that we extend the applied range from $C^{\ell}(\mathbb{T}^{n}; \mathbb{C}^{n})$ to $C^{\ell}(\mathbb{T}^{n}; X).$ The proof of this lemma consists in a direct check which is based on standard tools from calculus and complex analysis. It is used to deal with KAM theory for finite smooth systems by Zehnder\cite{E. Zehnder}.Also see \cite{Chierchia} and \cite{yuan-zhang} and references therein, for example. For ease of notation, we shall replace $\|\cdot\|_{X}$ by
$\|\cdot\|.$ Now let us apply this lemma to the perturbation $P(\phi).$

Fix a sequence of fast decreasing numbers $s_{\nu}\downarrow 0, \upsilon\geq 0,$ and $s_{0}\leq \frac{1}{2}.$
For a $X$-valued function $P(\phi),$
construct a sequence of real analytic functions $P^{(\upsilon)}(\phi)$ such that the following conclusions holds:
\begin{description}
  \item[(1)] $P^{(\upsilon)}(\phi)$ is real analytic on the complex strip $\mathbb{T}^{n}_{s_{\upsilon}}$ of the width $s_{\upsilon}$ around $\mathbb{T}^{n}.$
  \item[(2)] The sequence of functions $P^{(\upsilon)}(\phi)$ satisfies the bounds:
  \begin{equation}\label{cite3.3}
  \sup_{\phi\in\mathbb{T}^{n}}\|P^{(\upsilon)}(\phi)-P(\phi)\|\leq C \|P\|_{C^{\ell}}s_{\upsilon}^{\ell},
  \end{equation}
  \begin{equation}\label{cite3.4}
  \sup_{\phi\in\mathbb{T}^{n}_{s_{\upsilon+1}}}\|P^{(\upsilon+1)}(\phi)-P^{(\upsilon)}(\phi)\|\leq C \|P\|_{C^{\ell}}s_{\upsilon}^{\ell},
  \end{equation}
  where $C$ denotes (different) constants depending only on $n$ and $\ell.$
  \item[(3)] The first approximate $P^{(0)}$ is ``small" with the perturbation $P$. Precisely speaking, for arbitrary $\phi\in\mathbb{T}^{n}_{s_{0}},$ we have
      \begin{eqnarray}\label{cite3.5}
      \|P^{(0)}(\phi)\|&\leq &\|P^{(0)}(\phi)-\sum_{|\alpha|\leq \ell}\frac{\partial^{\alpha}P(\mathrm{Re \phi})}{\alpha!}(\sqrt{-1}\mathrm{Im}\phi)^{\alpha}\|\nonumber\\
      &&+\|\sum_{|\alpha|\leq \ell}\frac{\partial^{\alpha}P(\mathrm{Re \phi})}{\alpha!}(\sqrt{-1}\mathrm{Im}\phi)^{\alpha}\|\nonumber\\
      &\leq & C(\|P\|_{C^{\ell}}s_{0}^{\ell}+\sum_{0\leq m\leq\ell}\|P\|_{C^{m}}s_{0}^{m})\nonumber\\
      &\leq & C\|P\|_{C^{\ell}}\sum_{m=0}^{\ell}s_{0}^{m}\nonumber\\
      &\leq & C\|P\|_{C^{\ell}}\sum_{m=0}^{\infty}s_{0}^{m}\nonumber\\
      &\leq & C\|P\|_{C^{\ell}},
      \end{eqnarray}
      where constant $C$ is independent of $s_{0},$ and the last inequality holds true due to the hypothesis that
      $s_{0}\leq \frac{1}{2}.$
  \item[(4)] From the first inequality \eqref{cite3.3}, we have the equality below. For arbitrary $\phi\in\mathbb{T}^{n},$
      \begin{equation}\label{cite3.6}
      P(\phi)=P^{(0)}(\phi)+\sum_{\upsilon=0}^{+\infty}(P^{(\upsilon+1)}(\phi)-P^{(\upsilon)}(\phi)).
      \end{equation}
\end{description}

Now take a sequence of real numbers $\{s_{v}\geq 0\}_{v=0}^{\infty}$ with $s_{v}>s_{v+1}$ goes fast to zero.
Let $R^{p,q}(\theta)=P(\theta)$ for $p, q\in \{z, \overline{z}\}.$ Then by \eqref{cite3.6}
 we can write, for $p, q\in\{z, \overline{z}\},$
  \begin{equation}\label{*}
 R^{p,q}(\theta)=R_{0}^{p,q}(\theta)+\sum_{l=1}^{\infty}R^{p,q}_{l}(\theta),
 \end{equation}
 where $R^{p,q}_{0}(\theta)$ is analytic in $\mathbb{T}_{s_{0}}^{n}$ with
  \begin{equation}\label{**}
 \sup_{\theta\in\mathbb{T}^{n}_{s_{0}}}\|R_{0}^{p,q}(\theta)\|_{h_{N}\to h_{N}}\leq C,
  \end{equation}
 and $R_{l}^{p,q}(\theta)\;(l\geq 1)$ is analytic in $\mathbb{T}^{n}_{s_{l}}$ with
 \begin{equation}\label{***}
 \sup_{\theta\in\mathbb{T}^{n}_{s_{l}}}\|J R_{l}^{p,q}(\theta) J\|_{h_{N}\to h_{N}}\leq Cs^{N}_{l-1}.
  \end{equation}
 \section{Iterative parameters of domains}
 Let

\begin{itemize}
  \item $\varepsilon_{0}=\varepsilon, \varepsilon_{\nu}=\varepsilon^{(\frac{4}{3})^{\nu}}, \nu=0, 1, 2, \ldots,$
  which measures the size of perturbation at $\nu-th$ step.
  \item $s_{\nu}=\varepsilon_{\nu+1}^{1/N}, \nu=0, 1, 2, \ldots,$
  which measures the strip-width of the analytic domain $\mathbb{T}_{s_{\nu}}^{n},$
  $\mathbb{T}_{s_{\nu}}^{n}=\{\theta\in \mathbb{C}^{n}/2\pi \mathbb{Z}^{n}: |Im \theta|\leq s_{\nu}\}.$
  \item $C({\nu})$ is a constant which may be different in different places, and it is of the form
   $$C({\nu})= C_{1} 2^{C_{2}\nu},$$
  where $C_{1},$ $C_{2}$ are  constants.
  \item $K_{\nu}=100 s_{\nu}^{-1}2^{\nu}|\log \varepsilon|.$
  \item$\gamma_{\nu}=\frac{\gamma}{2^{\nu}},\,0<\gamma\ll 1.$
  \item a family of subsets $\Pi_{\nu}\subset [1, 2]$ with $[1, 2]\supset \Pi_{0}\supset \ldots \supset \Pi_{\nu}\supset \ldots,$
  and $$mes \Pi_{\nu}\geq mes \Pi_{\nu-1}-C\gamma^{1/3}_{\nu-1}.$$

  \item For an operator-value (or a vector function) $B(\theta, \tau),$ whose domain is
  $(\theta, \tau)\in \mathbb{T}_{s_{\nu}}^{n}\times \Pi_{\nu}.$ Set
  $$ \|B\|_{\mathbb{T}^{n}_{s_{\nu}}\times \Pi_{\nu}}=\sup_{(\theta, \tau)
  \in \mathbb{T}^{n}_{s_{\nu}}\times \Pi_{\nu}}\|B(\theta, \tau)\|_{h_{N}\to h_{N}},$$ where $\|\cdot\|_{h_{N}\to h_{N}}$ is the operator norm, and set
  $$ \|B\|^{\mathcal{L}}_{\mathbb{T}^{n}_{s_{\nu}}\times \Pi_{\nu}}=\sup_{(\theta, \tau)
  \in \mathbb{T}^{n}_{s_{\nu}}\times \Pi_{\nu}}\|\partial_{\tau}B(\theta, \tau)\|_{h_{N}\to h_{N}}.$$
\end{itemize}
\section{Iterative Lemma}
In the following, for a function $f(\omega)$, denote by $\partial_{\omega}$ the derivative of $f(\omega)$ with respect to $\omega$ in Whitney's sense.
\begin{lemma}
For $p, q\in \{z, \overline{z} \},$ let $R_{0,0}^{p,q}=R_{0}^{p,q}, R_{l,0}^{p,q}=R_{l}^{p,q},$ where
$R_{0}^{p,q}, R_{l}^{p,q}$ are defined by \eqref{*}, \eqref{**} and \eqref{***}.
Assume that we have a family of Hamiltonian functions $H_{\nu}$:
\begin{equation}\label{eq26}
H_{\nu}=\sum_{j=1}^{\infty}\lambda_{j}^{(\nu)}z_{j}\overline{z}_{j}
+\sum_{l\geq \nu}^{\infty}\varepsilon_{l}(\langle R^{zz}_{l,\nu}z,z\rangle+\langle R^{z\overline{z}}_{l,\nu}z, \overline{z}\rangle
+\langle R^{\overline{z}\,\overline{z}}_{l,\nu}\overline{z}, \overline{z}\rangle),\;\nu=0,1, \ldots, m,
\end{equation}
where $R^{zz}_{l,\nu}, R^{z\overline{z}}_{l,\nu}, R^{\overline{z}\overline{z}}_{l,\nu}$ are operator-valued function defined on the domain $\mathbb{T}_{s_{\nu}}^{n}\times \Pi_{\nu},$ and
\begin{equation}\label{eq27}
\theta=\omega t.
\end{equation}
\begin{description}
  \item[$(A1)_{\nu}$]
  \begin{equation}\label{eq28}
  \lambda_{j}^{(0)}=\sqrt{\lambda_{j}}=j,\;
  \lambda_{j}^{(\nu)}=\sqrt{\lambda_{j}}+\sum_{i=0}^{\nu-1}\varepsilon_{i}\mu_{j}^{(i)},\;\;\nu\geq 1
  \end{equation}
and $\mu_{j}^{(i)}=\mu_{j}^{(i)}(\tau):\Pi_{i}\rightarrow\mathbb{R}$ with
\begin{eqnarray}
 &&|\mu_{j}^{(i)}|_{\Pi_{i}}:=\sup_{\tau\in\Pi_{i}}|\mu_{j}^{(i)}(\tau)|\leq C(i)/j,\;
  0\leq i\leq\nu-1,\label{eq29}\\ &&|\mu_{j}^{(i)}|_{\Pi_{i}}^{\mathcal{L}}:=\sup_{\tau\in\Pi_{i}}|\partial_{\tau}\mu_{j}^{(i)}(\tau)|\leq C(i)/j,\;
  0\leq i\leq\nu-1.\label{eq029}
  \end{eqnarray}
 \item[$(A2)_{\nu}$]
 For $p, q\in \{z,\overline{z}\},$ $R_{l,\nu}^{p,q}=R_{l,\nu}^{p,q}(\theta, \tau)$ is defined in
  $\mathbb{T}^{n}_{s_{l}}\times \Pi_{\nu}$ with $l\geq \nu,$ and is analytic in $\theta$ for fixed $\tau\in \Pi_{\nu},$
  and
  \begin{equation}\label{eq30}
  \|J R^{p,q}_{l,\nu} J\|_{\mathbb{T}^{n}_{s_{l}}\times \Pi_{\nu}}\leq C(\nu),
  \end{equation}
  \begin{equation}\label{eq31}
  \|J R^{p,q}_{l,\nu} J\|^{\mathcal{L}}_{\mathbb{T}^{n}_{s_{l}}\times \Pi_{\nu}}\leq C(\nu).
  \end{equation}
\end{description}
Then there exists a compact set $\Pi_{m+1}\subset \Pi_{m}$ with
 \begin{equation}\label{eq32}
 mes \Pi_{m+1}\geq mes \Pi_{m}-C\gamma_{m}^{1/3},
  \end{equation}
  and exists a symplectic coordinate changes
  \begin{equation}\label{eq33}
\Psi_{m}: \mathbb{T}^{n}_{s_{m+1}}\times \Pi_{m+1}\rightarrow \mathbb{T}^{n}_{s_{m}}\times \Pi_{m},
  \end{equation}
  \begin{equation}\label{eq33H}
||\Psi_{m}-id ||_{h_{N}\to h_{N}}\leq \varepsilon^{1/2},\;(\theta, \tau)\in \mathbb{T}^{n}_{s_{m+1}}\times \Pi_{m+1}
  \end{equation}
  such that the Hamiltonian function $H_{m}$ is changed into
 \begin{eqnarray}\label{eq34}
H_{m+1}& \triangleq & H_{m}\circ \Psi_{m}\nonumber\\
&=&\sum_{j=1}^{\infty}\lambda_{j}^{(m+1)}z_{j}\overline{z}_{j}
+\sum_{l\geq m+1}^{\infty}\varepsilon_{l}\left [\langle R^{zz}_{l,m+1}z,z\rangle\right.\\
&&\left.+\langle R^{z\overline{z}}_{l,m+1}z, \overline{z}\rangle
+\langle R^{\overline{z}\,\overline{z}}_{l,m+1}\overline{z}, \overline{z}\rangle\right ]\nonumber,
  \end{eqnarray}
  which is defined on the domain $\mathbb{T}^{n}_{s_{m+1}}\times \Pi_{m+1},$
  and ${\lambda_{j}^{(m+1)}}^{,}s$ satisfy the assumptions $(A 1)_{m+1}$ and
  $R_{l, m+1}^{p,q} (p,q\in \{z, \overline{z}\})$ satisfy the assumptions $(A 2)_{m+1}.$
\end{lemma}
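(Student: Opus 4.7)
The plan is to perform one standard KAM step: seek $\Psi_m$ as the time-$1$ map of the Hamiltonian flow of an auxiliary quadratic Hamiltonian $F_m(\theta,z,\bar z)=\langle F^{zz}z,z\rangle+\langle F^{z\bar z}z,\bar z\rangle+\langle F^{\bar z\bar z}\bar z,\bar z\rangle$, chosen so that the leading perturbation $\varepsilon_m(\langle R^{zz}_{m,m}z,z\rangle+\langle R^{z\bar z}_{m,m}z,\bar z\rangle+\langle R^{\bar z\bar z}_{m,m}\bar z,\bar z\rangle)$ is cancelled up to a diagonal (in $k=0$, $i=j$) normal-form contribution that is absorbed into the new frequencies. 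Imposing $\{\sum_j\lambda_j^{(m)}z_j\bar z_j,F_m\}+\varepsilon_m R^{p,q}_{m,m}=[\varepsilon_m R^{p,q}_{m,m}]$ componentwise in the Fourier expansion $F^{p,q}_{ij}(\theta)=\sum_k \hat F^{p,q}_{ij}(k)e^{\sqrt{-1}\langle k,\theta\rangle}$ yields the homological equation
\begin{equation*}
(\sqrt{-1}\langle k,\omega\rangle+\sigma_p\lambda_i^{(m)}+\sigma_q\lambda_j^{(m)})\,\hat F^{p,q}_{ij}(k)=-\,\hat R^{p,q}_{m,m;ij}(k),
\end{equation*}
where the signs $\sigma_p,\sigma_q\in\{\pm 1\}$ correspond to $z,\bar z$, with the resonant modes ($k=0$, $i=j$ in the $z\bar z$ block) contributing the frequency shift $\varepsilon_m\mu_j^{(m)}$.

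To make this well-posed I will restrict $\tau$ to the set $\Pi_{m+1}\subset\Pi_m$ on which, for $|k|\le K_m$ and the relevant $(i,j)$, the small divisors satisfy a lower bound of the form $\gamma_m/(|k|^{n+1}(1+|i-j|)^{\tau_0})$ in the $\pm$-separated case and $\gamma_m/|k|^{n+1}$ in the $++$ case. Because $d=1$ gives $\lambda_j^{(m)}=j+O(\varepsilon)/j$, the differences $\lambda_i^{(m)}-\lambda_j^{(m)}$ inherit the good $\mathbb Z$-separation used in the Pöschel/Kuksin KAM theory, and the exclusion of resonant $\tau$ is carried out by standard parameter-excision: each forbidden set is an interval of measure $O(\gamma_m/|k|^{n+1})$, and summing over $|k|\le K_m$ together with the Lipschitz variation of $\lambda_j^{(m)}$ in $\tau$ (controlled by $(A1)_m$) yields $\mathrm{mes}(\Pi_m\setminus\Pi_{m+1})\le C\gamma_m^{1/3}$, where the exponent $1/3$ leaves room for both the measure sum and the inevitable Lipschitz losses. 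High modes $|k|>K_m$ are handled by the analytic truncation $K_m\sim s_m^{-1}|\log\varepsilon|$, whose contribution is absorbed into the next tail piece.

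With $F_m$ constructed, Cauchy estimates and the bound \eqref{eq30} for $R^{p,q}_{m,m}$ give $\|JF^{p,q}_mJ\|_{\mathbb T^n_{s_{m+1}}\times\Pi_{m+1}}\le \varepsilon_m^{-C}\cdot C(m)$ with a harmless polynomial loss, and hence $\|\Psi_m-\mathrm{id}\|\le\varepsilon_m^{1/2}$ after substituting $\varepsilon_m=\varepsilon^{(4/3)^m}$; this verifies \eqref{eq33H}. The conjugated Hamiltonian is expanded by the Lie series
\begin{equation*}
H_m\circ\Psi_m=H_m+\{H_m,F_m\}+\int_0^1(1-t)\{\{H_m,F_m\},F_m\}\circ\Psi_m^t\,dt,
\end{equation*}
and the homological equation converts the linear term into the new normal form $\sum_j\lambda_j^{(m+1)}z_j\bar z_j$ with $\lambda_j^{(m+1)}=\lambda_j^{(m)}+\varepsilon_m\mu_j^{(m)}$. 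The quadratic Lie bracket, the truncated high-$k$ tail, and the next analytic approximant $R^{p,q}_{m+1}$ coming from \eqref{*}--\eqref{***} are regrouped as the new sum $\sum_{l\ge m+1}\varepsilon_l\langle R^{p,q}_{l,m+1}\cdot,\cdot\rangle$, with \eqref{eq30}--\eqref{eq31} reproduced at level $m+1$ thanks to the fast decay $\varepsilon_{m+1}=\varepsilon_m^{4/3}$ absorbing all the loss factors $s_{m+1}^{-N}$, $\gamma_m^{-1}$ and $C(m)$.

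The delicate point, and the one on which I would spend the most care, is the frequency estimate $|\mu_j^{(m)}|,|\partial_\tau\mu_j^{(m)}|\le C(m)/j$ in $(A1)_{m+1}$. Because $\lambda_j^{(0)}=j$ grows only linearly, any correction $\mu_j^{(m)}$ that does not decay in $j$ would destroy the small-divisor structure at later steps; the $1/j$ decay is precisely what the $J R J$ boundedness (with $J=\mathrm{diag}(\sqrt j)$) gives: the diagonal entry of $R^{z\bar z}_{m,m}$ at $(j,j)$, being the $(j,j)$ entry of $J^{-1}(JR^{z\bar z}_{m,m}J)J^{-1}$, is bounded by $C(m)/j$, so the same bound propagates to $\mu_j^{(m)}$ and, by Whitney differentiation of the homological solution, to $\partial_\tau\mu_j^{(m)}$. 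Verifying that this decay survives the Lie bracket (the second-order contribution to the new diagonal) requires tracking the $J$-conjugated operator norms through the composition, which is the main technical bookkeeping of the step.
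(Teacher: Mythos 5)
Your overall scheme coincides with the paper's: a time-one Lie transform generated by a quadratic $F$, homological equations solved mode by mode after truncation at $K_m\sim s_m^{-1}|\log\varepsilon|$, the zero-mode diagonal of the $z\bar z$ block absorbed into $\lambda_j^{(m+1)}$, and the tail regrouped into $\sum_{l\ge m+1}\varepsilon_l R_{l,m+1}$. But the two points you pass over quickly are exactly where the work lies, and one of them fails as stated. Your non-resonance condition for the $z\bar z$ block, a lower bound of the form $\gamma_m/(|k|^{n+1}(1+|i-j|)^{\tau_0})$, is the wrong way round: the paper imposes $|-\langle k,\omega\rangle+\lambda_i^{(m)}-\lambda_j^{(m)}|\ge (|i-j|+1)\gamma_m/A_k$ with $A_k=|k|^{2n+3}+8$, i.e.\ a bound that \emph{grows} linearly in $|i-j|$. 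This is not cosmetic. The operators $JR^{z\bar z}_{mm}J$ are only known to be bounded on $h_N$; their entries satisfy $|\widehat R_{mmij}(k)|\le C(m)e^{-s_m|k|}/\sqrt{ij}$ with no off-diagonal decay, so entrywise division by your small divisors produces entries growing like $(1+|i-j|)^{\tau_0}$ and no bounded $JFJ$ results. Your sentence ``Cauchy estimates and \eqref{eq30} give $\|JFJ\|\le\varepsilon_m^{-C}C(m)$'' silently upgrades entrywise bounds to operator bounds, which is false in general. The factor $(|i-j|+1)$ in the Melnikov condition is precisely what lets P\"oschel's lemma (the paper's Lemma \ref{lem7.3} and the remark after it) convert the entrywise estimate $\sqrt i\,|\widehat F_{ij}(k)|\sqrt j\lesssim \frac{A_k}{\gamma_m(|i-j|+1)}e^{-s_m|k|}$ into the operator bound $\|JF^{z\bar z}J\|\le C(m+1)\varepsilon_m^{-6(n+1)/N}$, and the same mechanism is used a second time for the Whitney derivative in $\tau$.

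The measure estimate is also underspecified in a way that matters. You sum intervals of size $O(\gamma_m/|k|^{n+1})$ over $|k|\le K_m$ and attribute the exponent $1/3$ to ``Lipschitz losses,'' but for each $k$ the union runs over infinitely many pairs $(i,j)$, and summed naively it diverges. The paper's argument needs three ingredients you omit: (i) $Q^{(m)}_{kij}\ne\emptyset$ forces $|i-j|\le C|k|$, since $\lambda_i^{(m)}-\lambda_j^{(m)}$ is comparable to $i-j$; (ii) for $i,j\ge i_0:=|k|^{n+1}\gamma_m^{-1/3}$ one uses $\lambda_i^{(m)}-\lambda_j^{(m)}=i-j+O(\varepsilon_0/i_0)$ to absorb all pairs with the same $l=i-j$ into one slightly fattened set $\widetilde Q_{kl}$, leaving only $O(|k|)$ sets per $k$; (iii) for $i\le i_0$ or $j\le j_0$ one counts the $O(i_0|k|)$ remaining sets individually, each of measure $O(\gamma_m/A_k)$, which is summable over $k$ only because $A_k=|k|^{2n+3}+8$ is taken much larger than $|k|^{n+1}$. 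Balancing (ii) against (iii) through the choice of $i_0$ (and of $A_k$) is what produces $\mathrm{mes}(\Pi_m\setminus\Pi_{m+1})\le C\gamma_m^{1/3}$; Lipschitz dependence on $\tau$ plays no role there beyond the transversality bound $|\frac{d}{d\tau}(\cdot)|\gtrsim|i-j|+1$. Your identification of the diagonal decay $|\mu_j^{(m)}|\le C(m)/j$ from the $JRJ$ bound is correct and matches the paper, but without the two repairs above the solution estimate for $F$ and the smallness of the excised set are not established, so the induction does not close.
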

\section{Derivation of homological equations}
Our end is to find a symplectic transformation $\Psi_{\nu}$ such that the terms ${R^{zz}_{l, v}}$, ${R^{z\overline{z}}_{l, v}}$, ${R^{\overline{z}\overline{z}}_{l, v}}$
(with $l=v$) disappear. To this end, let $F$ be a linear Hamiltonian of the form
 \begin{equation}\label{eq35}
F=\langle F^{zz}(\theta, \tau)z, z\rangle+\langle F^{z \overline{z}}(\theta, \tau)z, \overline{z}\rangle+\langle F^{\overline{z}\,\overline{z}}(\theta, \tau)\overline{z}, \overline{z}\rangle,
  \end{equation}
where $\theta=\omega t,$ $(F^{zz}(\theta, \tau))^{T}=F^{zz}(\theta, \tau),$ $(F^{z\overline{z}}(\theta, \tau))^{T}=F^{z\overline{z}}(\theta, \tau),$ $(F^{\overline{z}\overline{z}}(\theta, \tau))^{T}=F^{\overline{z}\overline{z}}(\theta, \tau).$
Moreover, let
\begin{equation}\label{eq36}
\Psi=\Psi_{m}=X_{\varepsilon_{m}F}^{t}\big | _{t=1},
 \end{equation}
 where $X^{t}_{\varepsilon_{m}F}$ is the flow of the Hamiltonian, $X_{\varepsilon_{m}F}$ is the vector field
 of the Hamiltonian $\varepsilon_{m}F$ with the symplectic structure $\sqrt{-1} dz\wedge d\overline{z}.$
 Let
 \begin{equation}\label{eq37}
H_{m+1}=H_{m}\circ \Psi_{m}.
 \end{equation}
 By \eqref{eq26}, we write
 \begin{equation}\label{eq38}
H_{m}=N_{m}+R_{m},
 \end{equation}
 with
\begin{equation}\label{eq39}
N_{m}=\sum_{j=1}^{\infty}\lambda_{j}^{(m)}z_{j}\overline{z}_{j},
 \end{equation}
 \begin{equation}\label{eq40}
R_{m}=\sum_{l=m}^{\infty}\varepsilon_{l}R_{lm},
 \end{equation}
 \begin{equation}\label{eq41}
R_{lm}=\langle R_{l,m}^{zz}(\theta)z, z\rangle+\langle R_{l,m}^{z\overline{z}}(\theta)z, \overline{z}\rangle+\langle R_{l,m}^{\overline{z}\,\overline{z}}(\theta)\overline{z}, \overline{z}\rangle,
 \end{equation}
 where $(R_{l,m}^{zz}(\theta))^{T}=R_{l,m}^{zz}(\theta), $ $(R_{l,m}^{z\overline{z}}(\theta))^{T}=R_{l,m}^{z\overline{z}}(\theta), $ $(R_{l,m}^{\overline{z} \overline{z}}(\theta))^{T}=R_{l,m}^{\overline{z}\overline{z}}(\theta).$
 Since the Hamiltonian $H_{m}=H_{m}(\omega t, z, \overline{z})$ depends on time $t,$ we introduce a fictitious
 action $I=$ constant, and let $\theta=\omega t$ be angle variable. Then the non-autonomous $H_{m}(\omega t, z, \overline{z})$ can be written as
 $$\omega I+H_{m}(\theta, z, \overline{z})$$
 with symplectic structure $d I\wedge d\theta +\sqrt{-1} d z\wedge d \overline{z}$.
 By combination of \eqref{eq35}-\eqref{eq41} and Taylor formula, we have
 \begin{eqnarray}\label{eq42}
H_{m+1}&=&H_{m}\circ X^{1}_{\varepsilon_{m}F}
\nonumber\\
&=&N_{m}+\varepsilon_{m}\{N_{m},F\}
+\varepsilon^{2}_{m}\int_{0}^{1}(1-\tau)\{\{N_{m}, F\}, F\}\circ X^{\tau}_{\varepsilon_{m}F}d\tau
+\varepsilon_{m}\omega\cdot\partial_{\theta}F\nonumber\\
&&+\varepsilon_{m}R_{mm}+(\sum_{l=m+1}^{\infty}\varepsilon_{l}R_{lm})\circ X^{1}_{\varepsilon_{m}F}
+\varepsilon_{m}^{2}\int_{0}^{1}\{R_{mm}, F\}\circ X^{\tau}_{\varepsilon_{m} F}d\tau,
 \end{eqnarray}
 where $\{\cdot, \cdot\}$ is the Poisson bracket with respect to $\sqrt{-1} dz\wedge d \overline{z},$
  that is $$\{H(z, \overline{z}), F(z, \overline{z})\}=
  \sqrt{-1}\left(\frac{\partial H}{\partial z}\cdot\frac{\partial F}{\partial \overline{z}}
  -\frac{\partial H}{\partial \overline{z}}\cdot\frac{\partial F}{\partial z}\right).$$
 Let $\Gamma_{K_{m}}$ be a truncation operator.
For any
$$
 f(\theta)=\sum_{k\in \mathbb{Z}^{n}}\widehat{f}(k)e^{i\langle k, \theta\rangle},\;\theta\in\mathbb{T}^{n}.
$$
 Define, for given $K_{m}>0,$
 $$\Gamma_{K_{m}} f(\theta)=(\Gamma_{K_{m}} f)(\theta)\triangleq \sum_{|k|\leq K_{m}}\widehat{f}(k)e^{i\langle k, \theta\rangle},$$
 $$(1-\Gamma_{K_{m}}) f(\theta)=((1-\Gamma_{K_{m}}) f)(\theta)\triangleq \sum_{|k|> K_{m}}\widehat{f}(k)e^{i\langle k, \theta\rangle}.$$
 Then
 $$f(\theta)=\Gamma_{K_{m}} f(\theta)+(1-\Gamma_{K_{m}})f(\theta).$$
 Let
 \begin{equation}\label{eq43}
\omega\cdot\partial_{\theta}F+\{N_{m}, F\}+\Gamma_{K_{m}}R_{mm}=\langle[R^{z\overline{z}}_{mm}]z, \overline{z}\rangle,
  \end{equation}
  where
 \begin{equation}\label{eq44}
  [R^{z\overline{z}}_{mm}]:=diag \left(\widehat{R}^{z\overline{z}}_{mmjj}(0): j=1, 2, \ldots\right),
  \end{equation}
  and $R^{z\overline{z}}_{mmij}(\theta)$ is the matrix element of $R^{z\overline{z}}_{mm}$ and $\widehat{R}^{z\overline{z}}_{mmij}(k)$ is the
  $k$-Fourier coefficient of $R^{z\overline{z}}_{mmij}(\theta).$
  Then
  \begin{equation}\label{eq45}
  H_{m+1}=N_{m+1}+C_{m+1}R_{m+1},
  \end{equation}
  where
  \begin{equation}\label{eq46}
 N_{m+1}=N_{m}+\varepsilon_{m}\langle [R^{z\overline{z}}_{mm}]z, \overline{z}\rangle
 =\sum_{j=1}^{\infty}\lambda_{j}^{(m+1)}z_{j}\overline{z}_{j},
  \end{equation}
\begin{equation}\label{eq5.23}
\lambda_{j}^{(m+1)}=\lambda_{j}^{(m)}+\varepsilon_{m}\widehat{R}_{mmjj}^{z\overline{z}}(0)
=\sqrt{\lambda_{j}}+\sum_{l=1}^{m}\varepsilon_{l}\mu_{j}^{(l)},
\;\mu_{j}^{(m)}:=\widehat{R}_{mmjj}^{z\overline{z}}(0).
  \end{equation}
  \begin{eqnarray}
 C_{m+1}R_{m+1}&=&\varepsilon_{m}(1-\Gamma_{K_{m}})R_{mm}\label{eq5.24}\\
 &+&\varepsilon_{m}^{2}\int_{0}^{1}(1-\tau)\{\{N_{m}, F\},F\}\circ X_{\varepsilon_{m}F}^{\tau}d\tau\label{eq5.27}\\
 &+&\varepsilon_{m}^{2}\int_{0}^{1}\{R_{mm}, F\}\circ X^{\tau}_{\varepsilon_{m} F}d\tau\label{eq5.28}\\
  &+&\left(\sum_{l=m+1}^{\infty}\varepsilon_{l}R_{lm}\right)\circ X_{\varepsilon_{m}F}^{1}.\label{eq5.25}
  \end{eqnarray}
  The equation \eqref{eq43} is called homological equation. Developing the Poisson bracket $\{N_{m},F\}$ and comparing
  the coefficients of $z_{i}{z}_{j}, z_{i}\overline{z}_{j}, \overline{z}_{i}\overline{z}_{j} (i, j=1, 2, \ldots),$ we get
  \begin{equation}\label{eq48}
 \left\{
    \begin{array}{ll}
\omega\cdot\partial_{\theta} F^{zz}(\theta, \tau)
      +\sqrt{-1}(\Lambda^{(m)}F^{zz}(\theta, \tau)+F^{zz}(\theta, \tau)\Lambda^{(m)})=\Gamma_{K_{m}}R_{mm}^{zz}(\theta),\\
\omega\cdot\partial _{\theta} F^{\overline{z}\,\overline{z}}(\theta, \tau)-\sqrt{-1}
     (\Lambda^{(m)}F^{\overline{z}\,\overline{z}}(\theta, \tau)+F^{\overline{z}\,\overline{z}}(\theta, \tau)\Lambda^{(m)})
=\Gamma_{K_{m}}R_{mm}^{\overline{z}\,\overline{z}}(\theta),  \\
\omega\cdot\partial_{\theta} F^{z\overline{z}}(\theta, \tau)
       +\sqrt{-1}(F^{z\overline{z}}(\theta, \tau)\Lambda^{(m)}-\Lambda^{(m)}F^{z\overline{z}}(\theta, \tau))
=\Gamma_{K_{m}}R_{mm}^{z\overline{z}}(\theta)-[R_{mm}],
    \end{array}
  \right.
\end{equation}
where
\begin{equation}\label{eq49}
\Lambda^{(m)}=diag(\lambda_{j}^{(m)}: j=1, 2, \ldots),
\end{equation}
and we assume
$$\Gamma_{K_{m}}F^{zz}(\theta, \tau)=F^{zz}(\theta, \tau), \Gamma_{K_{m}}F^{z\overline{z}}(\theta, \tau)=F^{z\overline{z}}(\theta, \tau), \Gamma_{K_{m}}F^{\overline{z}\, \overline{z}}(\theta, \tau)=F^{\overline{z}\, \overline{z}}(\theta, \tau).$$
Write $F^{zz}_{ij}(\theta), F^{z\overline{z}}_{ij}(\theta), F^{\overline{z} \,\overline{z}}_{ij}(\theta)$ are the matrix elements of $F^{zz}(\theta, \tau), F^{z\overline{z}}(\theta, \tau), F^{\overline{z} \,\overline{z}}(\theta, \tau),$ respectively.
Then \eqref{eq48} can be rewritten as:
\begin{equation}\label{eq5.31}
\omega\cdot\partial_{\theta} F_{ij}^{zz}(\theta)
       +\sqrt{-1}(\lambda_{i}^{(m)}+\lambda_{j}^{(m)})F_{ij}^{zz}(\theta)=
       \Gamma_{K_{m}}R_{mmij}^{zz}(\theta),
\end{equation}
\begin{equation}\label{eq5.32}
     \omega\cdot\partial _{\theta} F_{ij}^{\overline{z}\,\overline{z}}(\theta)-\sqrt{-1}(\lambda_{i}^{(m)}
     +\lambda_{j}^{(m)})F_{ij}^{\overline{z}\,\overline{z}}(\theta)
=\Gamma_{K_{m}}R_{mmij}^{\overline{z}\,\overline{z}}(\theta),
\end{equation}

    \begin{equation}\label{eq5.33}
\omega\cdot\partial_{\theta} F_{ij}^{z\overline{z}}(\theta)
       -\sqrt{-1}(\lambda_{i}^{(m)}-\lambda_{j}^{(m)})F_{ij}^{z\overline{z}}(\theta)=
       \Gamma_{K_{m}}R_{mmij}^{z\overline{z}}(\theta),\;
i\neq j,
\end{equation}
 \begin{equation}\label{eq5.34}
\omega\cdot\partial_{\theta} F_{ii}^{z\overline{z}}(\theta)=\Gamma_{K_{m}}R_{mmii}^{z\overline{z}}(\theta)-\widehat{R}_{mmii}(0),
\end{equation}

where $i,j=1, 2, \ldots.$
\section{Solutions of the homological equations}
\begin{lemma}\label{lem7.1}
There exists a compact subset $\Pi_{m+1}^{+-}\subset \Pi_{m}$ with
\begin{equation}\label{eq7.1}
mes(\Pi_{m+1}^{+-})\geq mes \Pi_{m}-C\gamma_{m}^{1/3}
\end{equation}
such that for any $\tau\in \Pi_{m+1}^{+-}$ (Recall $\omega=\tau\omega_{0}$),
the equation \eqref{eq5.33} has a unique solution $F^{z\overline{z}}(\theta, \tau),$
which is defined on the domain $\mathbb{T}_{s_{m+1}}^{n}\times \Pi_{m+1}^{+-},$
with
\begin{equation}\label{eq7.2}
\|JF^{z\overline{z}}(\theta,\tau)J\|_{\mathbb{T}_{s_{m+1}}^{n}\times
\Pi_{m+1}^{+-}}\leq C(m+1)\varepsilon_{m}^{-\frac{6(n+1)}{N}},
\end{equation}
\begin{equation}\label{eq7.3}
\|JF^{z\overline{z}}(\theta,\tau)J\|^{\mathcal{L}}_{\mathbb{T}_{s_{m+1}}^{n}\times
\Pi_{m+1}^{+-}}\leq C(m+1)\varepsilon_{m}^{-\frac{12(n+1)}{N}}.
\end{equation}
\end{lemma}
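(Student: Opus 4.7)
The plan is to solve \eqref{eq5.33} directly by Fourier expansion in $\theta$. Since the right-hand side is the truncation $\Gamma_{K_m}R_{mmij}^{z\overline{z}}(\theta)$, I look for a solution with the same spectrum: writing $F_{ij}^{z\overline{z}}(\theta)=\sum_{0<|k|\leq K_m}\widehat F_{ij}^{z\overline{z}}(k)\,e^{\sqrt{-1}\langle k,\theta\rangle}$ and matching Fourier coefficients gives
$$\widehat F_{ij}^{z\overline{z}}(k)=\frac{\widehat R_{mmij}^{z\overline{z}}(k)}{\sqrt{-1}\bigl(\langle k,\omega\rangle-(\lambda_i^{(m)}-\lambda_j^{(m)})\bigr)}.$$
Everything therefore reduces to a uniform lower bound on the small divisor
$$D_{kij}(\tau):=\langle k,\tau\omega_0\rangle-\bigl(\lambda_i^{(m)}(\tau)-\lambda_j^{(m)}(\tau)\bigr),\qquad 0<|k|\leq K_m,\;\;i\neq j,$$
away from a small set of parameters.

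For the excision, I set
$$\Pi_{m+1}^{+-}:=\Pi_m\setminus\bigcup_{0<|k|\leq K_m,\;i\neq j}\Bigl\{\tau\in\Pi_m:\;|D_{kij}(\tau)|<\tfrac{\gamma_m}{K_m^{n+1}(i+j)^{2}}\Bigr\}.$$
The case $k=0$ is automatic, since by $(A1)_m$ one has $|D_{0ij}|=|\lambda_i^{(m)}-\lambda_j^{(m)}|\geq|i-j|/2\geq 1/2$. For $k\neq 0$, $(A1)_m$ yields $|\partial_\tau(\lambda_i^{(m)}-\lambda_j^{(m)})|\leq C\varepsilon\ll\gamma$, so the Diophantine condition \eqref{eq3} gives $|\partial_\tau D_{kij}(\tau)|\geq \gamma/(2|k|^{n+1})$. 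A standard one-dimensional sublevel-set argument then bounds the measure of each excluded slice by $C\gamma_m|k|^{n+1}/((i+j)^2 K_m^{n+1}\gamma)$, and summation over $|k|\leq K_m$ and $(i,j)$ (convergent thanks to the $(i+j)^{-2}$ weight) produces $\mathrm{meas}(\Pi_m\setminus\Pi_{m+1}^{+-})\leq C\gamma_m/\gamma\leq C\gamma_m^{1/3}$, which gives \eqref{eq7.1}.

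With the lower bound $|D_{kij}|\geq \gamma_m K_m^{-(n+1)}(i+j)^{-2}$ in hand, I estimate the operator norm of $JF^{z\overline{z}}J$ on $\mathbb T^n_{s_{m+1}}$. By $(A2)_m$ and Cauchy's estimate in the strip $\mathbb T^n_{s_m}$, the Fourier modes obey $\|J\widehat R_{mm}^{z\overline{z}}(k)J\|_{h_N\to h_N}\leq C(m)e^{-|k|s_m}$ and similarly for $\partial_\tau\widehat R_{mm}^{z\overline{z}}(k)$. Dividing by $D_{kij}$ loses at most $\gamma_m^{-1}K_m^{n+1}(i+j)^2$, which is absorbed into $\|J\cdot J\|$ by pulling out two powers of $\sqrt{i},\sqrt{j}$ from the weight $J=\mathrm{diag}(\sqrt{j})$; reassembly of the truncated Fourier series on the smaller strip contributes only a harmless factor $\sum_{|k|\leq K_m}e^{|k|(s_{m+1}-s_m)}\leq C$. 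Combining these yields
$$\|JF^{z\overline{z}}J\|_{\mathbb T^n_{s_{m+1}}\times\Pi^{+-}_{m+1}}\leq C(m+1)\gamma_m^{-1}K_m^{n+3}\leq C(m+1)\varepsilon_m^{-6(n+1)/N},$$
which is \eqref{eq7.2}. For the Lipschitz bound \eqref{eq7.3}, I differentiate $\widehat F(k)=\widehat R(k)/(\sqrt{-1}D_{kij})$ in $\tau$; the factor $\partial_\tau(1/D_{kij})=-\partial_\tau D_{kij}/D_{kij}^2$ squares the small-divisor loss, which accounts for the exponent doubling from $6(n+1)/N$ to $12(n+1)/N$.

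The main obstacle will be the bookkeeping of the small-divisor loss: I must track powers of $K_m$, $\gamma_m^{-1}$, and $(i+j)$ through the excision, the divisor estimate, and the Fourier reassembly so that the $(i+j)^2$ weight built into the excision is correctly absorbed by the two $J$-factors rather than by the operator norm on $h_N$, and so that the final exponent matches $6(n+1)/N$ with $K_m=\mathcal O(\varepsilon_m^{-1/N}|\log\varepsilon|)$. All other ingredients are routine: Cauchy estimates on the strip, Whitney differentiation of the divisors, and convergent geometric sums.
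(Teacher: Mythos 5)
Your overall reduction to Fourier coefficients and a small-divisor excision is the same starting point as the paper, but two of your key steps fail as stated. First, the measure estimate: your excised sets are indexed by \emph{all} pairs $i\neq j$ and $0<|k|\leq K_m$, and your claim that the sum over $(i,j)$ converges ``thanks to the $(i+j)^{-2}$ weight'' is false, since $\sum_{i\neq j}(i+j)^{-2}=\sum_{s}(s-1)s^{-2}$ diverges. Even if you discard the pairs where the set is automatically empty (which forces $|i-j|\leq C|k|$, an observation you do not make), the per-set measure is only bounded by $C\gamma_m|k|^{n+1}/(\gamma K_m^{n+1}(i+j)^{2})$, and summing over the $\sim|k|$ admissible differences, over $i$, and over $|k|\leq K_m$ yields a total of order $\gamma_m K_m^{n+1}/\gamma$, which is enormous compared with $\gamma_m^{1/3}$. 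The paper avoids exactly this by the structural fact \eqref{eq10.05}: for $i,j\geq i_0=|k|^{n+1}\gamma_m^{-1/3}$ one has $\lambda_i^{(m)}-\lambda_j^{(m)}=i-j+O(\varepsilon_0/i_0)$, so the infinitely many resonance sets with the same difference $l=i-j$ collapse into a single set $\widetilde Q_{kl}$ depending only on $l$, while the remaining pairs with $\min(i,j)\leq i_0$, $|i-j|\leq C|k|$ are finite in number and summed directly; this collapse is the missing idea without which no choice of per-pair weights gives \eqref{eq7.1}. (A secondary problem: your derivative bound $|\partial_\tau D_{kij}|\geq\gamma/(2|k|^{n+1})$ requires $C\varepsilon_0\leq\gamma K_m^{-(n+1)}/2$, which fails for large $m$ since $K_m\to\infty$; the paper instead divides by $\tau$ and uses $\bigl|\partial_\tau\bigl((-\langle k,\omega_0\rangle\tau+\lambda_i^{(m)}-\lambda_j^{(m)})/\tau\bigr)\bigr|\geq|i-j|/16$, which is independent of $k$ and of the Diophantine constant.)

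Second, the norm estimate: the loss $(i+j)^{2}$ from your small divisors cannot be ``absorbed into $\|J\cdot J\|$ by pulling out two powers of $\sqrt i,\sqrt j$,'' because those weights are already spent normalizing $R$: the available input is $|\widehat R^{z\overline z}_{mmij}(k)|\leq C(m)(ij)^{-1/2}e^{-s_m|k|}$ as in \eqref{eq7.8}, so your divisor bound gives entries $\sqrt i\,|\widehat F_{ij}|\,\sqrt j\lesssim \gamma_m^{-1}K_m^{n+1}(i+j)^{2}e^{-s_m|k|}$, which grow without bound along near-diagonal entries $|i-j|=1$, $i\to\infty$; such a matrix is not a bounded operator on $h_N$, so \eqref{eq7.2} does not follow. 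The paper's excision is calibrated precisely to fix this: the threshold $(|i-j|+1)\gamma_m/A_k$ \emph{grows} in $|i-j|$ and is independent of $i+j$, so $JFJ$ gains entrywise decay $1/(|i-j|+1)$, and Lemma \ref{lem7.3} (the Schur-type lemma of P\"oschel, with Remark \ref{rem1}) converts that off-diagonal decay into the $h_N\to h_N$ bound; the $k$-sum is then controlled by Lemma \ref{lem7.2}, which is also where the factor $\varepsilon_m^{-6(n+1)/N}$ genuinely comes from --- your claim that $\sum_{|k|\leq K_m}e^{-(s_m-s_{m+1})|k|}\leq C$ is likewise incorrect (it is of order $(s_m-s_{m+1})^{-n}$). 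So both the excision geometry (difference-dependent threshold plus the collapse by differences) and the operator-norm mechanism (Lemma \ref{lem7.3}) are essential ingredients that your proposal lacks.
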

\begin{proof}
By passing to Fourier coefficients, \eqref{eq5.33} can be rewritten as
\begin{equation}\label{eq7.4}
(-\langle k, \omega\rangle+\lambda_{i}^{(m)}-\lambda_{j}^{(m)})\widehat{F}_{ij}^{z\overline{z}}(k)
=\sqrt{-1}\widehat{R}_{mmij}^{z\overline{z}}(k),
\end{equation}
where $i, j=1, 2, \ldots, k\in \mathbb{Z}^{n}$ with $|k|\leq K_{m}.$ Recall $\omega=\tau \omega_{0}.$

Let
$$A_{k}=|k|^{2n+3}+8,$$
and let
\begin{equation}\label{eq7.5}
Q_{kij}^{(m)}\triangleq \left\{\tau\in\Pi_{m}\bigg| \big|-\langle k, \omega_{0}\rangle\tau
+\lambda_{i}^{(m)}-\lambda_{j}^{(m)}\big|<\frac{(|i-j|+1)\gamma_{m}}{A_{k}}\right\},
\end{equation}
where $i, j=1, 2, \ldots,$ $k\in\mathbb{Z}^{n}$ with $|k|\leq K_{m},$ and $k\neq 0$ when $i=j.$
Let
\begin{equation*}
\Pi_{m+1}^{+-}=\Pi_{m}\diagdown\bigcup_{|k|\leq K_{m}}\bigcup_{i=1}^{\infty}\bigcup_{j=1}^{\infty}Q_{kij}^{(m)}.
\end{equation*}
Then for any $\tau\in\Pi_{m+1}^{+-},$ we have
\begin{equation}\label{eq7.6}
\big|-\langle k, \omega\rangle
+\lambda_{i}^{(m)}-\lambda_{j}^{(m)}\big|\geq\frac{(|i-j|+1)\gamma_{m}}{A_{k}}.
\end{equation}
Recall that $R_{mm}^{{z}\overline{z}}(\theta)$ is analytic in the domain $\mathbb{T}_{s_{m}}^{n}$
for any $\tau\in \Pi_{m},$
\begin{equation}\label{eq7.8}
|\widehat{R}_{mmij}^{z\overline{{z}}}(k)|
\leq \frac{C(m)}{\sqrt{ij}}e^{-s_{m}|k|}.
\end{equation}
It follows
\begin{eqnarray}\label{eq7.7}
|\widehat{F}_{ij}^{z\overline{z}}(k)|
&=&\Bigg|\frac{\widehat{R}_{mmij}^{z\overline{z}}(k)}{-\langle k, \omega\rangle +\lambda_{i}^{(m)}-\lambda_{j}^{(m)}}\Bigg|\leq \frac{A_{k}}{\gamma_{m}(|i-j|+1)}\cdot|\widehat{R}_{mmij}^{{z}\overline{z}}(k)|\nonumber\\
&\leq &
\frac{(|k|^{2n+3}+8)}{\gamma_{m}(|i-j|+1)}\cdot\frac{C(m)}{\sqrt{ij}}e^{-s_{m}|k|}.
\end{eqnarray}

Now we need the following lemmas:
\begin{lemma}\label{lem7.2}
For $0<\delta<1, \nu>1,$ one has
$$\sum_{k\in \mathbb{Z}^{n}}e^{-2|k|\delta}|k|^{\nu}<\left(\frac{\nu}{e}\right)^{\nu}\frac{(1+e)^{n}}{\delta^{\nu+n}}
.$$
\end{lemma}
\begin{proof}
The proof can be found in \cite{Bogoljubov1969}.
\end{proof}

\begin{lemma}\label{lem7.3}
If $A=(A_{ij})$ is a bounded linear operator on $\ell^{2},$ then also $B=(B_{ij})$ with
$$B_{ij}=\frac{|A_{ij}|}{|i-j|},\;i\neq j,$$
and $B_{ii}=0$ is bounded linear operator on $\ell^{2},$ and $\|B\|\leq \left(\frac{\pi}{\sqrt{3}}\right)\|A\|,$
where $\|\cdot\|$ is $\ell^{2}\rightarrow\ell^{2}$ operator norm.
\end{lemma}

\begin{proof}
See P\"{o}schel, a KAM theorem for PDE \cite{Poschel1996}.
\end{proof}
\begin{rem}\label{rem1}
Lemma \ref{lem7.3} holds true for the weight norm $\|\cdot\|_{N}.$
\end{rem}

Therefore, by \eqref{eq7.7}, we have
\begin{eqnarray*}
&&\sup_{\theta\in\mathbb{T}^{n}_{s'_{m}}\times \Pi_{m+1}}(\sqrt{i}|F^{z\overline{z}}_{ij}(\theta, \tau)|\sqrt{j})\\
&\leq &\left(\sum_{|k|\leq K_{m}}(|k|^{2n+3}+8)e^{-(s_{m}-s'_{m})|k|}\right)\cdot\frac{C(m)}{\gamma_{m}(|i-j|+1)}\\
&\leq & C\left(\frac{2n+3}{e}\right)^{2n+3}\!(1+e)^{n}\left(\frac{2}{s_{m}-s'_{m}}\right)^{3n+3}\!\cdot
\frac{C(m)}{\gamma_{m}(|i-j|+1)}
\;( \mbox{by Lemma 7.2})\\
&\leq & C\cdot\frac{C(m)}{(s_{m}-s'_{m})^{3(n+1)}}\cdot\frac{1}{\gamma_{m}(|i-j|+1)}\\
&\leq & C\cdot\varepsilon_{m}^{-\frac{6(n+1)}{N}}\cdot\frac{C(m)}{\gamma_{m}(|i-j|+1)},
\end{eqnarray*}
where $C$ is a constant depending on $n,$ $s'_{m}=s_{m}-\frac{s_{m}-s_{m+1}}{4}.$

By Lemma \ref{lem7.3} and the Remark \ref{rem1}, we have
\begin{equation}\label{eq7.9}
\|JF^{z\overline{z}}(\theta, \tau)J\|_{\mathbb{T}^{n}_{s'_{m}}\times \Pi^{+-}_{m+1}}
\leq C\cdot C(m)\gamma_{m}^{-1}\varepsilon_{m}^{-\frac{6(n+1)}{N}}
\leq C(m+1)\varepsilon_{m}^{-\frac{6(n+1)}{N}}.
\end{equation}
It follows $s'_{m}>s_{m+1}$ that
$$\|JF^{z\overline{z}}(\theta, \tau)J\|_{\mathbb{T}^{n}_{s_{m+1}}\times \Pi^{+-}_{m+1}}
\leq\|JF^{z\overline{z}}(\theta, \tau)J\|_{\mathbb{T}^{n}_{s'_{m}}\times \Pi^{+-}_{m+1}}
\leq C(m+1)\varepsilon_{m}^{-\frac{6(n+1)}{N}}.$$
Applying $\partial_{\tau}$ to both sides of \eqref{eq7.4}, we have
\begin{equation}\label{eq7.10}
\left(-\langle k, \omega\rangle +\lambda_{i}^{(m)}-\lambda_{j}^{(m)}\right)\partial_{\tau}\widehat{F}_{ij}^{z\overline{z}}(k)
=\sqrt{-1}\partial_{\tau}\widehat{R}_{mmij}^{z\overline{z}}(k)+(*),
\end{equation}
where
\begin{equation}\label{eq7.11}
(*)=-\left(-\langle k, \omega_{0}\rangle+\partial_{\tau}(\lambda_{i}^{(m)}-\lambda_{j}^{(m)})\right)\widehat{F}_{ij}^{z\overline{z}}(k).
\end{equation}
Recalling $|k|\leq K_{m}=100s_{m}^{-1}2^{m}|\log \varepsilon|,$ and using \eqref{eq28} and \eqref{eq29} with $\nu=m,$ and
using \eqref{eq7.9}, we have, on $\tau\in\Pi_{m+1},$
\begin{equation}\label{eq7.12}
\sqrt{i}|(*)|\sqrt{j}\leq C\cdot C(m)\gamma_{m}^{-1}\varepsilon_{m}^{-\frac{6(n+1)}{N}}e^{-s'_{m}|k|}.
\end{equation}
According to \eqref{eq31},
\begin{equation}\label{eq7.13}
\mid \sqrt{i} \partial _{\tau}\widehat{R}^{z\overline{z}}_{mmij}(k)\sqrt{j}\mid \leq C(m)e^{-s_{m}|k|}.
\end{equation}
By \eqref{eq7.6}, \eqref{eq7.10}, \eqref{eq7.12} and \eqref{eq7.13}, we have
\begin{equation}\label{eq7.14}
\mid \sqrt{i} \partial _{\tau}\widehat{F}^{z\overline{z}}_{ij}(k)\sqrt{j}\mid
\leq \frac{A_{k}}{\gamma_{m}(|i-j|+1)}\cdot C\cdot C(m)\gamma_{m}^{-1}\varepsilon_{m}^{-\frac{6(n+1)}{N}}e^{-s'_{m}|k|} \;\;\mbox{for}\;\; i\neq j.
\end{equation}
Note that $s_{m}>s'_{m}>s_{m+1}.$ Again using Lemma \ref{lem7.2} and Lemma\ref{lem7.3}, we have
\begin{eqnarray}\label{eq7.15}
&&\|JF^{z\overline{z}}(\theta, \tau)J\|^{\mathcal{L}}_{\mathbb{T}_{s_{m+1}}\times \Pi_{m+1}^{+-}}
=\| J \partial _{\tau}{F}^{z\overline{z}}(\theta, \tau)J\|_{\mathbb{T}_{s_{m+1}}\times \Pi_{m+1}^{+-}}
\nonumber\\
&\leq &C^{2}\cdot C(m)\gamma^{-2}_{m}\varepsilon_{m}^{-\frac{12(n+1)}{N}}\leq C(m+1)\varepsilon_{m}^{-\frac{12(n+1)}{N}}.
\end{eqnarray}
The proof of the measure estimate \eqref{eq7.1} will be postponed to section 10.
This completes the proof of Lemma \ref{lem7.1}.
\end{proof}

\begin{lemma}\label{lem7.4}
There exists a compact subset $\Pi_{m+1}^{++}\subset \Pi_{m}$ with
\begin{equation}\label{eq a}
mes(\Pi_{m+1}^{++})\geq (mes \Pi_{m}-C\gamma_{m}^{1/3})
\end{equation}
such that for any $\tau\in \Pi_{m+1}^{++}$ (Recall $\omega=\tau\omega_{0}$),
the equation \eqref{eq5.31} has a unique solution $F^{z{z}}(\theta),$
which is defined on the domain $\mathbb{T}_{s_{m+1}}^{n}\times \Pi_{m+1}^{++},$
with
\begin{equation*}
\|JF^{z{z}}(\theta,\tau)J\|_{\mathbb{T}_{s_{m+1}}^{n}\times
\Pi_{m+1}^{++}}\leq C(m+1)\varepsilon_{m}^{-\frac{6(n+1)}{N}},
\end{equation*}
\begin{equation*}
\|JF^{z{z}}(\theta,\tau)J\|^{\mathcal{L}}_{\mathbb{T}_{s_{m+1}}^{n}\times
\Pi_{m+1}^{++}}\leq C(m+1)\varepsilon_{m}^{-\frac{12(n+1)}{N}}.
\end{equation*}
\end{lemma}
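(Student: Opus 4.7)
I would follow the same path as Lemma~\ref{lem7.1}, now adapted to the \emph{sum-type} small divisor $\langle k,\omega\rangle+\lambda_i^{(m)}+\lambda_j^{(m)}$ in place of the difference-type one. Passing to Fourier coefficients in $\theta$ turns \eqref{eq5.31} into
\begin{equation*}
\bigl(\langle k,\omega\rangle+\lambda_i^{(m)}+\lambda_j^{(m)}\bigr)\widehat F^{zz}_{ij}(k,\tau) = -\sqrt{-1}\,\widehat R^{zz}_{mmij}(k,\tau),\qquad |k|\leq K_m,\ i,j\geq 1,
\end{equation*}
and with $A_k=|k|^{2n+3}+8$ as before I would set $\Pi^{++}_{m+1}:=\Pi_m\setminus\bigcup_{|k|\leq K_m,\,i,j\geq 1}\widetilde Q^{(m)}_{kij}$, where $\widetilde Q^{(m)}_{kij}$ collects those $\tau\in\Pi_m$ at which the divisor above has modulus less than $\gamma_m/A_k$.

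The structural advantage of the sum-type divisor is that $\lambda_i^{(m)}+\lambda_j^{(m)}\geq i+j-O(\varepsilon)$, while $|\langle k,\omega_0\rangle\tau|\leq 2|k|$ on $\Pi_m\subset[1,2]$; hence $\widetilde Q^{(m)}_{kij}$ is empty as soon as $i+j>C_0|k|$ for an absolute constant $C_0$. In this off-diagonal regime the bound $|\langle k,\omega\rangle+\lambda_i^{(m)}+\lambda_j^{(m)}|\geq (i+j)/2$ already holds automatically, and only the finite block $i+j\leq C_0|k|$, of cardinality $O(|k|^2)$, ever requires parameter excision. Consequently on $\Pi^{++}_{m+1}$ one has the uniform entrywise bound $|\widehat F^{zz}_{ij}(k,\tau)|\leq (A_k/\gamma_m)|\widehat R^{zz}_{mmij}(k,\tau)|$.

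To convert this entrywise bound into the operator-norm estimate on $h_N$ I would use a two-piece argument: the off-diagonal contribution is controlled directly by the hypothesis $\|JR^{zz}_{mm}J\|\leq C(m)$ together with the $(i+j)$-gain (no excision needed), while the finite resonant block is handled by a Schur / Hilbert--Schmidt estimate that costs at most a polynomial factor of $|k|$, harmlessly absorbed into $A_k$. Summing in $k$ via Lemma~\ref{lem7.2} on an intermediate strip $s'_m=s_m-\tfrac14(s_m-s_{m+1})$ then produces the claimed
\begin{equation*}
\|JF^{zz}(\theta,\tau)J\|_{\mathbb{T}^n_{s_{m+1}}\times\Pi^{++}_{m+1}}\leq C(m+1)\varepsilon_m^{-6(n+1)/N}.
\end{equation*}
The Lipschitz bound follows by differentiating the Fourier equation in $\tau$ and running the same machine, using \eqref{eq31} together with the bound just obtained to control the extra term coming from $\partial_\tau$ of the divisor (exactly as in \eqref{eq7.10}--\eqref{eq7.15}); this costs one further factor of $A_k/\gamma_m$ and produces the exponent $12(n+1)/N$. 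The measure estimate \eqref{eq a} I would defer to Section~10, in line with Lemma~\ref{lem7.1}.

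The principal obstacle, and the only real departure from the proof of Lemma~\ref{lem7.1}, is the passage from the entrywise divisor bound to an operator-norm bound on $h_N$: Lemma~\ref{lem7.3} cannot be invoked, since the sum-type divisor offers no $|i-j|$-gain. Everything else---the Fourier reduction, the Diophantine excision, the Lemma~\ref{lem7.2} summation, and the differentiation step for the Lipschitz estimate---is a routine transcription of the $z\overline z$ argument.
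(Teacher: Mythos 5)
Your proposal is essentially the route the paper intends: it omits the proof of Lemma \ref{lem7.4} entirely, remarking only that it is ``a little bit simpler'' than that of Lemma \ref{lem7.1}, and your Fourier reduction, excision with the same $A_k$, summation over $k$ via Lemma \ref{lem7.2} on the intermediate strip, the $\partial_\tau$ step for the Lipschitz bound, and the deferral of the measure estimate all transcribe the $z\overline z$ argument correctly.

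The one point where you depart --- and where your self-diagnosis is off --- is the claim that Lemma \ref{lem7.3} ``cannot be invoked, since the sum-type divisor offers no $|i-j|$-gain,'' which you call the principal obstacle. There is no obstacle here: the sum-type excision gives the entrywise gain $|\widehat F^{zz}_{ij}(k)|\leq \frac{A_k}{\gamma_m\,(i+j+1)}\,|\widehat R^{zz}_{mmij}(k)|$ (or, off the excised set in the non-resonant regime $i+j>C_0|k|$, the gain $2/(i+j)$ for free), and since $i+j+1\geq |i-j|+1$ this is entrywise \emph{stronger} than the gain available in Lemma \ref{lem7.1}. Hence the matrix $\bigl(|\widetilde R_{ij}|/(i+j+1)\bigr)$ is dominated entrywise by $\bigl(|\widetilde R_{ij}|/(|i-j|+1)\bigr)$, whose off-diagonal part is exactly the object of Lemma \ref{lem7.3} (with Remark \ref{rem1} for the weighted norm) and whose diagonal part is trivially bounded by $\|\widetilde R\|$; entrywise domination of nonnegative matrices preserves the $\ell^2$ operator bound. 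Alternatively one can quote directly that the Hilbert-type matrix $1/(i+j)$ is bounded, by the same Schur test that proves Lemma \ref{lem7.3}. This is precisely why the paper regards the $zz$ and $\overline z\,\overline z$ cases as simpler. Your replacement --- splitting into the automatically non-resonant region $i+j>C_0|k|$ and a finite block of size $O(|k|)$ rows treated by a Hilbert--Schmidt bound, with the resulting $|k|^{1/2}$ loss absorbed into $A_k$ --- is also valid provided you carry it out on the conjugated matrices $J\cdot J$ in $h_N$ (so that the weight ratios are already accounted for by the hypothesis $\|JR^{zz}_{mm}J\|\leq C(m)$), but it is more work than necessary.
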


\begin{lemma}\label{lem7.5}
There exists a compact subset $\Pi_{m+1}^{--}\subset \Pi_{m}$ with
\begin{equation}\label{eq b}
mes(\Pi_{m+1}^{--})\geq (mes \Pi_{m}-C\gamma_{m}^{1/3})
\end{equation}
such that for any $\tau\in \Pi_{m+1}^{--}$ (Recall $\omega=\tau\omega_{0}$),
the equation \eqref{eq5.32} has a unique solution $F^{\overline{z}\overline{z}}(\theta),$
which is defined on the domain $\mathbb{T}_{s_{m+1}}^{n}\times \Pi_{m+1}^{--},$
with
\begin{equation*}
\|JF^{\overline{z}\overline{z}}(\theta,\tau)J\|_{\mathbb{T}_{s_{m+1}}^{n}\times
\Pi_{m+1}^{--}}\leq C(m+1)\varepsilon_{m}^{-\frac{6(n+1)}{N}},
\end{equation*}
\begin{equation*}
\|JF^{\overline{z}\overline{z}}(\theta,\tau)J\|^{\mathcal{L}}_{\mathbb{T}_{s_{m+1}}^{n}\times
\Pi_{m+1}^{--}}\leq C(m+1)\varepsilon_{m}^{-\frac{12(n+1)}{N}}.
\end{equation*}
\end{lemma}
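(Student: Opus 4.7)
The plan is to imitate the proof of Lemma \ref{lem7.1} with the single key change that the small divisor $-\langle k,\omega\rangle+\lambda_i^{(m)}-\lambda_j^{(m)}$ is replaced by $-\langle k,\omega\rangle-\lambda_i^{(m)}-\lambda_j^{(m)}$. First I would pass to Fourier coefficients in $\theta$ and rewrite \eqref{eq5.32} as
\begin{equation*}
\bigl(-\langle k,\omega\rangle-\lambda_i^{(m)}-\lambda_j^{(m)}\bigr)\,\widehat{F}^{\,\overline{z}\overline{z}}_{ij}(k)=\sqrt{-1}\,\widehat{R}^{\,\overline{z}\overline{z}}_{mm\,ij}(k),\qquad |k|\le K_m,\ i,j\ge 1.
\end{equation*}
Set $A_k=|k|^{2n+3}+8$ and introduce the resonant sets
\begin{equation*}
Q_{kij}^{(m)}:=\Bigl\{\tau\in\Pi_m:\bigl|-\langle k,\omega_0\rangle\tau-\lambda_i^{(m)}-\lambda_j^{(m)}\bigr|<\frac{(i+j+1)\gamma_m}{A_k}\Bigr\},
\end{equation*}
and define $\Pi_{m+1}^{--}:=\Pi_m\setminus\bigcup_{|k|\le K_m}\bigcup_{i,j\ge 1}Q_{kij}^{(m)}$. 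On this set the first Melnikov bound
\begin{equation*}
\bigl|-\langle k,\omega\rangle-\lambda_i^{(m)}-\lambda_j^{(m)}\bigr|\ge\frac{(i+j+1)\gamma_m}{A_k}
\end{equation*}
holds by construction.

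Next, using the analyticity bound $|\widehat{R}^{\,\overline{z}\overline{z}}_{mm\,ij}(k)|\le C(m)(ij)^{-1/2}e^{-s_m|k|}$ from $(A2)_m$ applied to $JR^{\overline{z}\overline{z}}_{mm}J$, I divide by the small divisor and obtain
\begin{equation*}
\sqrt{i}\,|\widehat{F}^{\,\overline{z}\overline{z}}_{ij}(k)|\,\sqrt{j}\le\frac{A_k}{\gamma_m(i+j+1)}\cdot C(m)e^{-s_m|k|}.
\end{equation*}
Summing the geometric factor $e^{-(s_m-s'_m)|k|}$ over $k$ via Lemma \ref{lem7.2} with $s'_m=s_m-(s_m-s_{m+1})/4$ produces a factor $C(s_m-s'_m)^{-3(n+1)}\le C\varepsilon_m^{-6(n+1)/N}$. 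The remaining matrix $(i+j+1)^{-1}$ is even better than the $(|i-j|+1)^{-1}$ of Lemma \ref{lem7.1}: it is bounded by $|i-j|^{-1}$ off the diagonal so an application of Lemma \ref{lem7.3} (with Remark \ref{rem1}) yields a bounded operator on $h_N$, which gives the first estimate
\begin{equation*}
\|JF^{\overline{z}\overline{z}}J\|_{\mathbb{T}^n_{s_{m+1}}\times\Pi_{m+1}^{--}}\le C(m+1)\varepsilon_m^{-6(n+1)/N}.
\end{equation*}
For the Lipschitz bound in $\tau$ I would differentiate the Fourier equation, handle the extra term $(-\langle k,\omega_0\rangle+\partial_\tau(\lambda_i^{(m)}+\lambda_j^{(m)}))\widehat{F}^{\,\overline{z}\overline{z}}_{ij}(k)$ using $(A1)_m$ together with the already-derived bound on $F^{\,\overline{z}\overline{z}}$, and repeat the summation argument to pick up the extra $\gamma_m^{-1}\varepsilon_m^{-6(n+1)/N}$ factor.

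For the measure estimate, since $\lambda_i^{(m)}+\lambda_j^{(m)}\ge i+j-O(\varepsilon)$ is positive and grows like $i+j$ while $|\langle k,\omega_0\rangle\tau|\le 4|k|\le 4K_m$, the set $Q_{kij}^{(m)}$ is empty unless $i+j\le CK_m$. For each admissible triple $(k,i,j)$ with $k\ne 0$, the map $\tau\mapsto-\langle k,\omega_0\rangle\tau-\lambda_i^{(m)}-\lambda_j^{(m)}$ has derivative $|\langle k,\omega_0\rangle|\ge\gamma/|k|^{n+1}$ by the Diophantine condition, so $|Q_{kij}^{(m)}|\le C(i+j+1)\gamma_m|k|^{n+1}/(A_k\gamma)$. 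Summing over $|k|\le K_m$ and over the $O(K_m^2)$ admissible $(i,j)$ gives the desired bound $C\gamma_m^{1/3}$ once the exponent $2n+3$ in $A_k$ is used to absorb powers of $K_m$; the $k=0$ contribution is handled separately since then the divisor is just $-\lambda_i^{(m)}-\lambda_j^{(m)}\le -2+O(\varepsilon)$ and $Q_{0ij}^{(m)}=\emptyset$.

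The main obstacle, as in Lemma \ref{lem7.1}, is producing the measure estimate uniformly as $m$ grows: one must ensure that the $(i+j+1)$ weight in the numerator of the exclusion and the polynomial power $A_k=|k|^{2n+3}+8$ are balanced so that summing over the (finite but $m$-dependent) index range $i+j\lesssim K_m$ and $|k|\le K_m$ still yields $\le C\gamma_m^{1/3}$; this is the reason for the deferred reference to Section 10, and I would invoke the same argument there. Apart from this measure bookkeeping, the proof is essentially a sign-change transcription of Lemma \ref{lem7.1}, and Lemma \ref{lem7.4} is proved in the identical fashion for the $++$ case.
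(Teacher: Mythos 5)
Your treatment of the homological equation itself (passing to Fourier coefficients, excising resonant sets, dividing by the small divisor, summing with Lemma \ref{lem7.2}, converting entrywise bounds to operator bounds via Lemma \ref{lem7.3} and Remark \ref{rem1}, and the Lipschitz-in-$\tau$ estimate) is exactly the Lemma \ref{lem7.1} template that the paper intends, and for the $\overline{z}\overline{z}$ divisor this part is fine. The gap is in your measure estimate. First, the map $\tau\mapsto-\langle k,\omega_{0}\rangle\tau-\lambda_{i}^{(m)}(\tau)-\lambda_{j}^{(m)}(\tau)$ does \emph{not} have derivative $-\langle k,\omega_{0}\rangle$: by $(A1)_{m}$ and \eqref{eq029} the frequencies $\lambda_{i}^{(m)}$ depend on $\tau$ with $|\partial_{\tau}(\lambda_{i}^{(m)}+\lambda_{j}^{(m)})|=O(\varepsilon_{0})$, and since $|k|$ runs up to $K_{m}\to\infty$, the Diophantine lower bound $\gamma/|k|^{n+1}$ is eventually much smaller than this $O(\varepsilon_{0})$ term, so your claimed lower bound on the derivative fails for large $|k|$. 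Second, even granting that derivative bound, the bookkeeping does not close: each excised set has measure $\lesssim (i+j+1)\gamma_{m}|k|^{n+1}/(A_{k}\gamma)$, and summing over the admissible pairs $i+j\leq C|k|$ and then over $|k|\leq K_{m}$ gives a total of order $\gamma_{m}K_{m}/\gamma=2^{-m}K_{m}\sim\varepsilon_{m+1}^{-1/N}|\log\varepsilon|$, which diverges as $m\to\infty$ rather than being $\leq C\gamma_{m}^{1/3}$; the exponent $2n+3$ in $A_{k}$ cannot absorb the extra $|k|^{n+1}/\gamma$ coming from the weak derivative bound.

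The correct route, and the one the deferred Section 10 argument actually uses, is to divide by $\tau$ and differentiate $-\langle k,\omega_{0}\rangle-(\lambda_{i}^{(m)}+\lambda_{j}^{(m)})/\tau$, whose $\tau$-derivative has modulus $\geq c\,(i+j)$ (cf.\ \eqref{eq*} and the estimate below \eqref{eq10.014}, with $|i-j|$ replaced by $i+j$), the $O(\varepsilon_{0})$ corrections now being harmless. This yields a per-triple measure $\lesssim\gamma_{m}/A_{k}$, and since $Q^{(m)}_{kij}$ is nonempty only when $k\neq0$ and $i+j\leq C|k|$ (your emptiness observations are correct, and indeed make this case simpler than Lemma \ref{lem7.1}), summing the $O(|k|^{2})$ admissible pairs against $A_{k}^{-1}\sim|k|^{-2n-3}$ and then over $k$ gives $\leq C\gamma_{m}\leq C\gamma_{m}^{1/3}$, as required. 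With the measure step repaired in this way the rest of your proposal goes through.
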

The proofs of Lemma \ref{lem7.4} and Lemma \ref{lem7.5} are a little bit simpler than that of Lemma \ref{lem7.1}.
So we omit them.

Let
$$\Pi_{m+1}=\Pi_{m+1}^{+-}\bigcap\Pi_{m+1}^{++}\bigcap\Pi_{m+1}^{--}.$$
By \eqref{eq7.1}, \eqref{eq a} and \eqref{eq b}, we have
$$mes \Pi_{m+1}\geq mes \Pi _{m}-C\gamma_{m}^{1/3}.$$
\section{Coordinate change $\Psi$ by $\varepsilon_{m} F$}
Recall $\Psi=\Psi_{m}=X_{\varepsilon_{m}F}^{t}\big| _{t=1},$ where $X^{t}_{\varepsilon_{m}F}$ is the flow of the Hamiltonian $\varepsilon_{m} F$, vector field $X_{\varepsilon_{m}F}$ with symplectic $\sqrt{-1}dz\wedge d\overline{z}.$
So
$$\sqrt{-1} \dot{z}=\varepsilon_{m}\frac{\partial F}{\partial \overline{z}},\;
-\sqrt{-1}\dot{\overline{z}}=\varepsilon_{m}\frac{\partial F}{\partial {z}},\;\dot{\theta}=\omega.$$
More exactly,
$$\left\{
    \begin{array}{ll}
      \sqrt{-1}\dot{z}=\varepsilon_{m}(F^{z\overline{z}}(\theta, \tau)z
+2F^{\overline{z}\,\overline{z}}(\theta, \tau)\overline{z}),\;\theta=\omega t,\\
     -\sqrt{-1}\dot{\overline{z}}=\varepsilon_{m}(2F^{{z}{z}}(\theta, \tau){z}+F^{z\overline{z}}(\theta, \tau)\overline{z}),\;\theta=\omega t, \\
 \dot{\theta}=\omega.
    \end{array}
  \right.
$$

Let $u=\left(
         \begin{array}{c}
           z \\
           \overline{z} \\
         \end{array}
       \right),$
\begin{equation}\label{eq8.0}
B_{m}=\left(
            \begin{array}{cc}
            -\sqrt{-1} F^{z\overline{z}}(\theta, \tau) & -2\sqrt{-1}F^{\overline{z}\,\overline{z}}(\theta, \tau) \\
             2\sqrt{-1} F^{zz}(\theta, \tau) & \sqrt{-1} F^{z\overline{z}}(\theta, \tau)\\
            \end{array}
          \right).\;\;\mbox{Recall }\;\;\theta=\omega t.
\end{equation}
Then
\begin{equation}\label{eq8.1}
\frac{du(t)}{dt}=\varepsilon_{m}B_{m}(\theta)u,\;\;\dot{\theta}=\omega.
\end{equation}
Let $u(0)=u_{0}\in h_{N}\times h_{N},\,\theta(0)=\theta_{0}\in \mathbb{T}^{n}_{s_{m+1}}$ be
initial value.
Then
\begin{equation}\label{eq8.2}
\left\{
  \begin{array}{ll}
    u(t)=u_{0}+\int_{0}^{t}\varepsilon_{m}B_{m}(\theta_{0}+\omega s)u(s)ds, \\
    \theta(t)=\theta_{0}+\omega t.
  \end{array}
\right.
\end{equation}
By Lemmas \ref{lem7.1}, \ref{lem7.4} and \ref{lem7.5} in Section 7,
\begin{equation}\label{eq8.3}
\|J B_{m}(\theta)J\|_{\mathbb{T}^{n}_{s_{m+1}}\times \Pi_{m+1}}\leq C(m+1)\varepsilon_{m}^{-\frac{6(n+1)}{N}},
\end{equation}
\begin{equation}\label{eq8.4}
\|J B_{m}(\theta) J\|^{\mathcal{L}}_{\mathbb{T}^{n}_{s_{m+1}}\times \Pi_{m+1}}\leq C(m+1)\varepsilon_{m}^{-\frac{12(n+1)}{N}}.
\end{equation}

It follows from \eqref{eq8.2} that
$$u(t)-u_{0}=\int_{0}^{t}\varepsilon_{m}B_{m}(\theta_{0}+\omega s)u_{0}ds+\int_{0}^{t}\varepsilon_{m}B_{m}(\theta_{0}+\omega s)(u(s)-u_{0})ds.$$
Moreover, for $t\in [0,1]$, $\|u_{0}\|_{N}\leq 1,$
\begin{equation}\label{eq8.5}
\|u(t)-u_{0}\|_{N}\leq \varepsilon_{m}C(m+1)\varepsilon_{m}^{-\frac{6(n+1)}{N}}
+\int_{0}^{t}\varepsilon_{m}\|B_{m}(\theta_{0}+\omega s)\|\|u(s)-u_{0}\|_{N}ds,
\end{equation}
where $\|\cdot\|$ is the operator norm from $h_{N}\times h_{N}\rightarrow h_{N}\times h_{N}.$

By Gronwall's inequality,
\begin{equation}\label{eq8.6}
\|u(t)-u_{0}\|_{N}\leq C(m+1)\varepsilon_{m}^{1-\frac{6(n+1)}{N}}
\cdot\exp\left(\int_{0}^{t}\varepsilon_{m}\|B_{m}(\theta_{0}+\omega s)\|ds\right)\leq \varepsilon_{m}^{1/2}.
\end{equation}
Thus,
\begin{equation}\label{eq8.7}
\Psi_{m}: \mathbb{T}^{n}_{s_{m+1}}\times \Pi_{m+1}\rightarrow \mathbb{T}^{n}_{s_{m}}\times \Pi_{m},
\end{equation}
and
\begin{equation}\label{eq8.8}
\|\Psi_{m}-id\|_{h_{N}\to h_{N}}\leq \varepsilon_{m}^{1/2}.
\end{equation}
Since \eqref{eq8.1} is linear, so $\Psi_{m}$ is linear coordinate change. According to \eqref{eq8.2}, construct
Picard sequence:
$$\left\{
    \begin{array}{ll}
      u_{0}(t)=u_{0}, \\
      u_{j+1}(t)=u_{0}+\int_{0}^{t}\varepsilon_{m}B(\theta_{0}+\omega s)u_{j}(s)ds,\;j=0, 1, 2,\ldots.
    \end{array}
  \right.
$$
By \eqref{eq8.8}, this sequence with $t=1$ goes to
\begin{equation}\label{eq8.10}
\Psi_{m}(u_{0})=u(1)=(id+P_{m}(\theta_{0}))u_{0},
\end{equation}
where $id$ is the identity from $h_{N}\times h_{N}\rightarrow h_{N}\times h_{N},$ and $P(\theta_{0})$ is an operator form $h_{N}\times h_{N}\rightarrow h_{N}\times h_{N}$
for any fixed $\theta_{0}\in \mathbb{T}^{n}_{{s_{m+1}}}, \tau\in\Pi_{m+1},$
and is analytic in $\theta_{0}\in \mathbb{T}^{n}_{{s_{m+1}}},$ with
\begin{equation}\label{eq8.11}
\|P_{m}(\theta_{0})\|_{\mathbb{T}^{n}_{s_{m+1}}\times \Pi_{m+1}}\leq \varepsilon_{m}^{1/2}.
\end{equation}
Note that \eqref{eq8.1} is a Hamiltonian system. So $P_{m}(\theta_{0})$ is a symplectic linear operator from
$h_{N}\times h_{N}$ to $h_{N}\times h_{N}.$
\section{Estimates of remainders}
The aim of this section is devoted to estimate the remainders:
$$C_{m+1}R_{m+1}=(6.14)+\ldots + (6.17).$$
\begin{itemize}
  \item Estimate of \eqref{eq5.24}.

By \eqref{eq41}, let
$$\widetilde{R}_{mm}=\widetilde{R}_{mm}(\theta)=\left(
            \begin{array}{cc}
            R_{_{m,m}}^{zz}(\theta) & \frac{1}{2}R_{m,m}^{z\overline{z}}(\theta) \\
              \frac{1}{2}R_{m,m}^{z\overline{z}}(\theta) & R_{m,m}^{\overline{z}\overline{z}}(\theta)\\
            \end{array}
          \right),$$
then $$R_{mm}=\langle \widetilde{R}_{mm}\left(
                                          \begin{array}{cc}
                                            z \\
                                            \overline{z}
                                          \end{array}
                                        \right)
, \left(
 \begin{array}{cc}
  z \\
  \overline{z}
  \end{array}
  \right)\rangle.$$ So
$$(1-\Gamma_{K_{m}})R_{mm}\triangleq
\langle (1-\Gamma_{K_{m}})\widetilde{R}_{mm}
\left(
          \begin{array}{c}
   z \\
\overline{z} \\
     \end{array}
      \right),
\left(
          \begin{array}{c}
   z \\
\overline{z} \\
     \end{array}
      \right)
\rangle .$$
By the definition of truncation operator $\Gamma_{K_{m}},$
$$(1-\Gamma_{K_{m}})\widetilde{R}_{mm}=\sum_{|k|> K_{m}}\widehat{\widetilde{R}}_{mm}(k)e^{i \langle k, \theta\rangle},
\;\theta\in \mathbb{T}^{n}_{s_{m}},\;\tau\in \Pi_{m}.$$
Since $\widetilde{R}_{mm}=\widetilde{R}_{mm}(\theta)$ is analytic in $\theta\in \mathbb{T}^{n}_{s_{m}},$
\begin{eqnarray*}
&&\sup_{(\theta, \tau)\in\mathbb{T}^{n}_{s_{m+1}}\times \Pi_{m+1}}\|J(1-\Gamma_{K_{m}})\widetilde{R}_{mm}J\|_{l_{N}\to l_{N}}^{2}
\leq\sum_{|k|> K_{m}}\|J\widehat{\widetilde{R}}_{mm}(k)J\|_{N}^{2}e^{2|k|s_{m+1}}\\
&&\leq\|J\widetilde{R}_{mm}J\|_{\mathbb{T}_{s_{m}}^{n}\times \Pi_{m}}^{2}\sum_{|k|>K_{m}}e^{-2(s_{m}-s_{m+1})|k|}
\\
&&\leq{C^{2}(m)\varepsilon_{m}^{-1}e^{-2K_{m}(s_{m}-s_{m+1})}}\;\mbox{(by \eqref{eq30})}\\
&&\leq C^{2}(m)\varepsilon^{2}_{m}.
\end{eqnarray*}
That is,
$$\|J(1-\Gamma_{K_{m}})\widetilde{R}_{mm}J\|_{\mathbb{T}^{n}_{s_{m+1}}\times \Pi_{m+1}}\leq \varepsilon_{m}C(m).$$
Thus,
\begin{equation}\label{eq9.1}
\|\varepsilon_{m}J(1-\Gamma_{K_{m}})\widetilde{R}_{mm}J\|_{\mathbb{T}^{n}_{s_{m+1}}\times \Pi_{m+1}}
\leq \varepsilon^{2}_{m}C(m)\leq \varepsilon_{m+1}C(m+1).
\end{equation}
  \item Estimate of \eqref{eq5.28}.

Let
\begin{equation*}
S_{m}=\left(
            \begin{array}{cc}
             F^{z{z}}(\theta, \tau) & \frac{1}{2}F^{{z}\,\overline{z}}(\theta, \tau) \\
            \frac{1}{2} F^{z\overline z}(\theta, \tau) & F^{\overline {z}\overline{z}}(\theta, \tau)\\
            \end{array}
          \right),
\;\;\mathcal{J}=\left(\begin{array}{cc}
             0& -\sqrt{-1}id \\
            \sqrt{-1}id & 0\\
            \end{array}
          \right).
\end{equation*}
Then we can write
$$F=\langle S_{m}(\theta)\left(
                       \begin{array}{c}
                         z \\
                         \overline{z} \\
                       \end{array}
                     \right),
\left(
                       \begin{array}{c}
                         z \\
                         \overline{z} \\
                       \end{array}
                     \right)
\rangle =\langle S_{m}u, u\rangle,\;u=\left(
                       \begin{array}{c}
                         z \\
                         \overline{z} \\
                       \end{array}
                     \right).$$
Then
\begin{equation}\label{eq9.2}
\varepsilon_{m}^{2}\{R_{mm}, F\}
=4\varepsilon_{m}^{2} \langle {\widetilde{R}}_{mm}(\theta)\mathcal{J}S_{m}(\theta)u, u\rangle.
\end{equation}
Noting $\mathbb{T}^{n}_{s_{m}}\times \Pi_{m}\supset \mathbb{T}^{n}_{s_{m+1}}\times \Pi_{m+1}.$ By \eqref{eq31} with $l=m, v=m,$
\begin{equation}\label{eq9.02}
\|\widetilde{R}_{mm}(\theta)\|_{\mathbb{T}^{n}_{s_{m+1}}\times \Pi_{m+1}}
\leq \|\widetilde{R}_{mm}(\theta)\|_{\mathbb{T}^{n}_{s_{m}}\times \Pi_{m}}\leq C(m),
\end{equation}
\begin{equation}\label{eq9.002}
\|\widetilde{R}_{mm}(\theta)\|^{\mathcal{L}}_{\mathbb{T}^{n}_{s_{m+1}}\times \Pi_{m+1}}\leq C(m).
\end{equation}
Let $\widetilde{S}_{m}(\theta)=\mathcal{J}S_{m}(\theta).$
Then by Lemmas \ref{lem7.1}, \ref{lem7.4} and \ref{lem7.5} in Section 7, we have
\begin{equation}\label{eq8.04}
\|J \widetilde{S}_{m}(\theta)J\|_{\mathbb{T}^{n}_{s_{m+1}}\times \Pi_{m+1}}\leq C(m+1)\varepsilon_{m}^{-\frac{6(n+1)}{N}},
\end{equation}
\begin{equation}\label{eq8.05}
\|J \widetilde{S}_{m}(\theta)J\|^{\mathcal{L}}_{\mathbb{T}^{n}_{s_{m+1}}\times \Pi_{m+1}}\leq C(m+1)\varepsilon_{m}^{-\frac{12(n+1)}{N}},
\end{equation}
and
\begin{equation}\label{eq9.3}
\|\widetilde{R}_{mm}\mathcal{J}S_{m}\|_{\mathbb{T}^{n}_{s_{m+1}}\times \Pi_{m+1}}=\|\widetilde{R}_{mm}\widetilde{S}_{m}\|_{\mathbb{T}^{n}_{s_{m+1}}\times \Pi_{m+1}}\leq C(m)C(m+1)\varepsilon_{m}^{-\frac{6(n+1)}{N}}.
\end{equation}
Note that the vector field is linear. So, by Taylor formula, one has
$$
\eqref{eq5.28}=\varepsilon_{m}^{2}\langle \widetilde{R}^{*}_{m}(\theta)u, u\rangle, $$
where
$$\widetilde{R}^{*}_{m}(\theta)=\sum_{j=1}^{\infty}\frac{4^{j}\varepsilon_{m}^{j-1}}{j!}\cdot\widetilde{R}_{mm}\underbrace{\widetilde{S}_{m}\cdots \widetilde{S}_{m}}_{j-fold}.$$
By \eqref{eq9.02} and \eqref{eq8.04},
\begin{eqnarray*}
\|J\widetilde{R}^{*}_{m}(\theta)J\|_{\mathbb{T}^{n}_{s_{m+1}}\times \Pi_{m+1}}
&\leq &\sum_{j=1}^{\infty}\frac{C(m)C(m+1)\varepsilon_{m}^{j-1}(\varepsilon_{m}^{-\frac{6(n+1)}{N}})^{j}}{j!}\\
&\leq &C(m)C(m+1)\varepsilon_{m}^{-\frac{6(n+1)}{N}}.
\end{eqnarray*}
By \eqref{eq9.002} and \eqref{eq8.05},
$$\|J\widetilde{R}^{*}_{m}(\theta)J\|^{\mathcal{L}}_{\mathbb{T}^{n}_{s_{m+1}}\times \Pi_{m+1}}\leq C(m)C(m+1)\varepsilon_{m}^{-\frac{12(n+1)}{N}}.$$
Thus,
\begin{equation}\label{eq*1}
||\varepsilon_{m}^{2}J\widetilde{R}_{m}^{*}J||_{\mathbb{T}^{n}_{s_{m+1}}\times \Pi_{m+1}}
\leq C(m)C(m+1)\varepsilon_{m}^{2-\frac{6(n+1)}{N}}\leq C(m+1)\varepsilon_{m+1},
\end{equation}
and
\begin{equation}\label{eq*2}
||\varepsilon_{m}^{2}J\widetilde{R}_{m}^{*}J||^{\mathcal{L}}_{\mathbb{T}^{n}_{s_{m+1}}\times \Pi_{m+1}}
\leq C(m)C(m+1)\varepsilon_{m}^{2-\frac{12(n+1)}{N}}\leq C(m+1)\varepsilon_{m+1}.
\end{equation}

   \item Estimate of \eqref{eq5.27}

By \eqref{eq43},
\begin{equation}\label{eq9.7}
\{N_{m}, F\}=\langle [R^{z \overline{z}}_{mm}]z, \overline{z}\rangle-\Gamma_{K_{m}}R_{mm}-\omega\cdot \partial_{\theta}F\triangleq R_{mm}^{*}.
\end{equation}
Thus,
\begin{equation}\label{eq9.8}
\eqref{eq5.27}=\varepsilon_{m}^{2}\int_{0}^{1}(1-\tau)\{R_{mm}^{*}, F\}\circ X_{\varepsilon_{m}F}^{\tau} d\tau.
\end{equation}
Note $R_{mm}^{*}$ is a quadratic polynomial in $z$ and $\overline{z}.$ So we write
\begin{equation}\label{eq9.9}
R^{*}_{mm}=\langle \mathcal{R}_{m}(\theta, \tau)u, u\rangle,\; u=\left(
                                                                 \begin{array}{c}
                                                                   z \\
                                                                   \overline{z}\\
                                                                 \end{array}
                                                               \right).
\end{equation}
By \eqref{eq29} and \eqref{eq029} with $l=\nu=m,$ and using \eqref{eq8.04} and \eqref{eq8.05},
\begin{equation}\label{eq9.10}
\|J\mathcal{R}_{m}J\|_{\mathbb{T}^{n}_{s_{m}+1}\times \Pi_{m+1}}\leq C(m)\varepsilon_{m}^{-\frac{6(n+1)}{N}},\;\;
\|J\mathcal{R}_{m}J\|^{\mathcal{L}}_{\mathbb{T}^{n}_{s_{m}+1}\times \Pi_{m+1}}\leq C(m)\varepsilon_{m}^{-\frac{12(n+1)}{N}},
\end{equation}
where $\|\cdot\|$ is the operator norm in $h_{N}\times h_{N}\rightarrow h_{N}\times h_{N}.$
Recall $F=\langle S_{m}(\theta, \tau)u, u\rangle.$
Set
\begin{equation}\label{eq9.11}
[\mathcal{R}_{m}, \widetilde{S}_{m}]=2\mathcal{R}_{m}\widetilde{S}_{m}=2\mathcal{R}_{m}\mathcal{J}S_{m}.
\end{equation}
Using Taylor formula to \eqref{eq9.8}, we get
\begin{eqnarray*}
\eqref{eq5.27}&=&\frac{\varepsilon_{m}^{2}}{2!}\{\{R^{*}_{mm}, F\}, F\}+\ldots +\frac{\varepsilon_{m}^{j}}{j!}
\underbrace{\{\ldots\{R^{*}_{mm}, F\},\ldots, F\}}_{j-\mbox{fold}}+\ldots\\
&=&\Bigg\langle\left(\sum_{j=2}^{\infty}\frac{\varepsilon^{j}_{m}}{j!}\underbrace{[\ldots [\mathcal{R}_{m}, \widetilde{S}_{m}], \ldots , \widetilde{S}_{m}]}_{j-\mbox{fold}}\right)u, u\Bigg\rangle\\
&\triangleq &\langle \mathcal{R}^{**}(\theta, \tau)u,u\rangle.
\end{eqnarray*}
By \eqref{eq8.04},\eqref{eq9.10} and \eqref{eq9.11}, we have
\begin{eqnarray}\label{eq9.12}
\|J\mathcal{R}^{**}(\theta, \tau)J\|_{\mathbb{T}^{n}_{s_{m+1}}\times \Pi_{m+1}}
&\leq & \sum_{j=2}^{\infty}\frac{1}{j!}\|J\mathcal{R}_{m}(\theta, \tau)J\|_{\mathbb{T}^{n}_{s_{m}}\times \Pi_{m}}
(2\|J\widetilde{S}_{m}J\|_{\mathbb{T}^{n}_{s_{m+1}}\times \Pi_{m+1}}\varepsilon_{m})^{j}\nonumber\\
&\leq & \sum_{j=2}^{\infty}\frac{C(m)}{j!}\left(\varepsilon_{m}C(m+1)\varepsilon_{m}^{-\frac{6(n+1)}{N}}\right)^{j}\nonumber\\
&\leq & C(m+1)\varepsilon_{m}^{4/3}=C(m+1)\varepsilon_{m+1}.
\end{eqnarray}
Similarly,
\begin{equation}\label{eq9.13}
\|\mathcal{R}^{**}\|^{\mathcal{L}}_{\mathbb{T}^{n}_{s_{m+1}}\times \Pi_{m+1}}\leq C(m+1)\varepsilon_{m+1}.
\end{equation}
  \item Estimate of \eqref{eq5.25}
\begin{eqnarray}\label{eq9.19}
\eqref{eq5.25}=\sum_{l=m+1}^{\infty}\varepsilon_{l}(R_{lm}\circ X_{\varepsilon_{m}F}^{1}).
\end{eqnarray}
Write
$$R_{lm}=\langle \widetilde{R}_{lm}(\theta)u, u\rangle.$$
Then, by Taylor formula:
$$R_{lm}\circ X_{\varepsilon_{m}F}^{1}=R_{lm}+\sum_{j=1}^{\infty}\frac{1}{j!}\langle \widetilde{R}_{lmj} u, u\rangle,$$
where
$$\widetilde{R}_{lmj}=4^{j}\widetilde{R}_{lm}(\theta)\underbrace{\widetilde{S}_{m}(\theta)\cdots \widetilde{S}_{m}(\theta)}_{j-fold}\varepsilon_{m}^{j}.$$
By \eqref{eq30}, \eqref{eq31},
$$\|J\widetilde{R}_{lm}J\|_{\mathbb{T}^{n}_{s_{l}}\times \Pi_{m}}\leq C(l),\;\;\|J\widetilde{R}_{lm}J\|^{\mathcal{L}}_{\mathbb{T}^{n}_{s_{l}}\times \Pi_{m}}\leq C(l).$$
Combing the last inequalities with \eqref{eq8.3} and \eqref{eq8.4}, one has
\begin{eqnarray*}
&&\|J\widetilde{R}_{lmj}J\|_{\mathbb{T}^{n}_{s_{l}}\times \Pi_{m+1}}\\
&\leq & \|J\widetilde{R}_{lm}J\|_{\mathbb{T}^{n}_{s_{l}}\times \Pi_{m+1}}\cdot (||J\widetilde{S}_{m}J||_{\mathbb{T}^{n}_{m+1}\times \Pi_{m+1}}4\varepsilon_{m})^{j}\\
&\leq & C^{2}(m)(\varepsilon_{m}\varepsilon_{m}^{-\frac{6(n+1)}{N}})^{j},
\end{eqnarray*}
where we used $||J^{-1}||_{\mathbb{T}^{n}_{sl}\times \Pi_{m+1}}\leq C,$
and
\begin{eqnarray*}
&&||J\widetilde{R}_{lmj}J||^{\mathcal{L}}_{\mathbb{T}^{n}_{s_{l}}\times \Pi_{m+1}}\\
&\leq & ||J\widetilde{R}_{lm}J||^{\mathcal{L}}_{\mathbb{T}^{n}_{s_{l}}\times \Pi_{m+1}}(||J\widetilde{S}_{m}J||_{\mathbb{T}^{n}_{s_{l}}\times \Pi_{m+1}}4\varepsilon_{m})^{j}\\
&&+
||J\widetilde{R}_{lm}J||_{\mathbb{T}^{n}_{s_{l}}\times \Pi_{m+1}}(||J\widetilde{S}_{m}J||^{\mathcal{L}}_{\mathbb{T}^{n}_{s_{l}}\times \Pi_{m+1}}\varepsilon_{m})^{j}\\
&\leq &C^{2}(m)(\varepsilon_{m}\varepsilon_{m}^{-\frac{12(n+1)}{N}})^{j}.
\end{eqnarray*}
Thus, let
$$\overline{R}_{l,m+1}:=R_{lm}+\sum_{j=1}^{\infty}\frac{1}{j!}\widetilde{R}_{lmj},$$
then
\begin{equation}\label{eq*3}
\eqref{eq5.25}=\sum_{l=m+1}^{\infty}\varepsilon_{l}\langle \overline{R}_{l,m+1}u, u\rangle
\end{equation}
and
\begin{equation}\label{eq*4}
||J\overline{R}_{l,m+1}J||_{\mathbb{T}^{n}_{s_{l}}\times \Pi_{m+1}}\leq C^{2}(m)\leq C(m+1),\;\;
||J\overline{R}_{l,m+1}J||^{\mathcal{L}}_{\mathbb{T}^{n}_{s_{l}}\times \Pi_{m+1}}\leq C^{2}(m)\leq C(m+1).
\end{equation}
As a whole, the remainder $R_{m+1}$ can be written  as
$$C_{m+1}R_{m+1}=\sum_{l=m+1}^{\infty}\varepsilon_{l}(\langle R^{zz}_{l, \nu}(\theta)z,z\rangle
+\langle R^{z\overline{z}}_{l, \nu}(\theta)z,\overline{z}\rangle)+\langle R^{\overline{z}\overline{z}}_{l, \nu}(\theta)\overline{z},\overline{z}\rangle),\;\;\nu=m+1,$$
where, for $p,q\in \{z,\overline{z}\},$ $R_{l\nu}^{p,q}$ satisfies \eqref{eq30} and \eqref{eq31} with
$\nu=m+1, l\geq m+1.$
This shows that the Assumption $(A2)_{\nu}$ with $\nu=m+1$ holds true.

By \eqref{eq5.23},
$$\mu_{j}^{(m)}=\widehat{R}_{mmjj}^{z\overline{z}}(0).$$
In \eqref{eq30} and \eqref{eq31}, taking $p=z, q=\overline{z},$ we have
\begin{eqnarray*}
&&|\mu_{j}^{(m)}|_{\Pi_{m}}\leq
|R^{z\overline{z}}_{mmjj}(\theta, \tau)|/j\leq C(m)/j,\\
&&|\mu_{j}^{(m)}|_{\Pi_{m}}^{\mathcal{L}}\leq |\partial_{\tau}R^{z\overline{z}}_{mmjj}(\theta, \tau)|/j\leq C(m)/j.
\end{eqnarray*}
This shows that the Assumption $(A1)_{\nu}$ with $\nu=m+1$ holds true.
\end{itemize}
\section{Estimate of measure}
In this section, $C$ denotes a universal constant, which may be different in different places.
Now let us return to \eqref{eq7.5}
\begin{equation}\label{eq10.01}
Q_{kij}^{(m)}\triangleq \left\{\tau\in\Pi_{m}\bigg| \big|-\langle k, \omega_{0}\rangle\tau
+\lambda_{i}^{(m)}-\lambda_{j}^{(m)}\big|<\frac{(|i-j|+1)\gamma_{m}}{A_{k}}\right\}.
\end{equation}
Then $i=j$, one has $k\neq 0.$ At this time,
\begin{equation}\label{eq10.02}
Q_{kii}^{(m)}= \left\{\tau\in\Pi_{m}\bigg| \big|\langle k, \omega_{0}\rangle\tau
\big|<\frac{\gamma_{m}}{A_{k}}\right\}.
\end{equation}
It follows
$$ \big|\langle k, \omega_{0}\rangle
\big|<\frac{\gamma_{m}}{(|k|^{2n+3}+8)\tau}.$$
Recall $|\langle k, \omega_{0}\rangle |>\frac{\gamma}{|k|^{n+1}}$.
Then
\begin{equation}\label{eq10.03}
mes Q_{kii}^{(m)}=0.
\end{equation}
Suppose $i\neq j,$ in the following.
If $ Q_{kij}^{(m)}=\varnothing,$ then $mes Q_{kij}^{(m)}=0.$ So we assume $ Q_{kij}^{(m)}\neq\varnothing.$
Then $\exists \,\tau\in \Pi_{m}$ such that
\begin{equation}\label{eq10.04}
|-\langle k, \omega_{0}\rangle \tau+\lambda_{i}^{(m)}-\lambda_{j}^{(m)}|<\frac{|i-j|+1}{A_{k}}\gamma_{m}.
\end{equation}
It follows from \eqref{eq28} and \eqref{eq29} that
\begin{equation}\label{eq10.05}
\lambda_{i}^{(m)}-\lambda_{j}^{(m)}=i-j+O (\frac{\varepsilon_{0}}{i})+O (\frac{\varepsilon_{0}}{j}).
\end{equation}
Moreover,
\begin{equation}\label{eq10.06}
|\lambda_{i}^{(m)}-\lambda_{j}^{(m)}|\geq \frac{1}{2}|i-j|.
\end{equation}
By \eqref{eq10.04} and \eqref{eq10.06}, one has
\begin{eqnarray*}
|\langle k, \omega_{0}\rangle \tau|&\geq &|\lambda_{i}^{(m)}-\lambda_{j}^{(m)}|-\frac{|i-j|+1}{A_{k}}\\
&\geq & \frac{1}{2}|i-j|-\frac{|i-j|+1}{A_{k}}\\
&\geq & \frac{1}{4}|i-j|.
\end{eqnarray*}
Recall $\omega=\omega_{0}\tau.$ So
\begin{equation}\label{eq10.07}
4|\langle k, \omega\rangle|\geq |i-j|.
\end{equation}
Again by \eqref{eq10.04} and \eqref{eq10.05}, we have that, when $\tau\in \Pi_{m}$ such that
\eqref{eq10.04} holds true, the following inequality holds true:
\begin{eqnarray}\label{eq10.08}
|-\langle k, \omega\rangle +i-j|&\leq &\frac{|i-j|+1}{A_{k}}\gamma_{_{m}}+\frac{C_{1}\varepsilon_{0}}{i}+
\frac{C_{2}\varepsilon_{0}}{j}\nonumber\\
&\leq & \frac{|i-j|+1}{A_{k}}\gamma_{_{m}}+\frac{C_{1}\varepsilon_{0}}{i_{0}}+
\frac{C_{2}\varepsilon_{0}}{j_{0}},\;\mbox{if}\;i\geq i_{0},\;j\geq j_{0},
\end{eqnarray}
where $C_{1}>0, C_{2}>0$ are constants.

Thus
\begin{equation}\label{eq10.09}
Q^{(m)}_{kij}\subset \left\{\tau\in\Pi_{m}\big||-\langle k, \omega\rangle +l|<\frac{|l|+1}{A_{k}}\gamma_{m}
+\frac{C_{1}\varepsilon_{0}}{i_{0}}+\frac{C_{2}\varepsilon_{0}}{j_{0}}\right\}\triangleq \widetilde{Q}_{kl},
\end{equation}
when $i\geq i_{0},$ $j\geq j_{0}.$
By \eqref{eq10.07}, one has
\begin{equation}\label{eq10.010}
|l|\leq 4 |\langle k, \omega\rangle|\leq C |k|.
\end{equation}
Note that
$$-\langle k, \omega\rangle +l=-\langle k, \omega_{0}\rangle\tau +l=\tau(-\langle k, \omega_{0}\rangle+\frac{l}{\tau})$$
and
\begin{equation}\label{eq*}
\big|\frac{d}{d \tau}(-\langle k, \omega_{0}\rangle+\frac{l}{\tau})\big|=\frac{|l|}{\tau^{2}}\geq \frac{1}{4}|l|.
\end{equation}
It follows that
\begin{equation}\label{eq10.011}
mes \widetilde{Q}_{kl}\leq \frac{8}{|l|}\left(\frac{|l|+1}{A_{k}}\gamma_{m}+\frac{C_{1}\varepsilon_{0}}{i_{0}}
+\frac{C_{2}\varepsilon_{0}}{j_{0}}\right).
\end{equation}
Take
\begin{equation}\label{eq10.012}
j_{0}=i_{0}=|k|^{n+1}\gamma_{m}^{-1/3}.
\end{equation}
Then
\begin{eqnarray*}
mes \bigcup_{1\leq l\leq C|k|} \widetilde{Q}_{kl}
&\leq & \frac{C|k|\gamma_{m}}{A_{k}}+C\sum_{1\leq |l|\leq C|k|}\frac{1}{|l|}
(\frac{C_{1}\varepsilon_{0}}{i_{0}}+\frac{C_{2}\varepsilon_{0}}{j_{0}})\\
&\leq & \frac{C|k|\gamma_{m}}{A_{k}}+C\gamma_{m}^{1/3}\varepsilon_{0}\frac{\log |k|}{|k|^{n+1}}\\
&\leq & C\gamma_{m}^{1/3}\varepsilon_{0}\frac{\log |k|}{|k|^{n+1}}.
\end{eqnarray*}
Thus,
\begin{equation}\label{eq10.013}
mes \bigcup_{\begin{array}{c}
                                i\geq i_{0} \\
                                j\geq j_{0} \\
                                |i-j|\leq C|k|
                              \end{array}}
                              \!\!\!Q_{kij}^{(m)}\leq C\gamma_{m}^{1/3}\varepsilon_{0}\frac{\log |k|}{|k|^{n+1}}.
 \end{equation}
Now assume
\begin{eqnarray}\label{eq10.014}
i\leq i_{0} \;\;\mbox{or}\;\;j\leq j_{0}\;\;\mbox{and}\;\; |i-j|\leq C |k|.
\end{eqnarray}
By \eqref{eq10.04} and \eqref{eq10.06},  we have
$$\Big|\frac{d}{d\tau}\left(\frac{-\langle k, \omega_{0}\rangle \tau +\lambda_{i}^{(m)}-\lambda_{j}^{(m)}}{\tau}\right)\Big|
\geq \frac{|\lambda_{i}^{(m)}-\lambda_{j}^{(m)}|}{4}\geq \frac{|i-j|+1}{16},$$

\begin{eqnarray}\label{eq10.015}
mes \!\! \!\!\!\bigcup_{\begin{array}{c}
                                i\leq i_{0} \\
                                |i-j|\leq C|k|
                              \end{array}}
                             \!\! \!\!\!Q_{kij}^{(m)}
                             &\leq &
\sum_{\begin{array}{c}
                                1\leq i\leq i_{0} \nonumber\\
                                |i-j|\leq C|k|
                              \end{array}}\!\!\!\frac{2(|i-j|+1)\gamma_{m}}{A_{k}}\cdot \frac{16}{|i-j|+1}\\
                              & \leq &Ci_{0}\frac{C|k|\gamma_{m}}{A_{k}}
                              \leq C|k|^{n+2}\gamma_{m}^{2/3}\frac{1}{A_{k}}\leq \frac{C\gamma_{m}^{2/3}}{|k|^{n+1}},
\end{eqnarray}
and
\begin{eqnarray}\label{eq10.017}
mes \!\! \!\!\!\bigcup_{\begin{array}{c}
                                j\leq j_{0} \\
                                |i-j|\leq C|k|
                              \end{array}}
                             \!\! \!\!\!Q_{kij}^{(m)}
                              \leq \frac{C\gamma_{m}^{2/3}}{|k|^{n+1}}.
\end{eqnarray}
Combining \eqref{eq10.03}, \eqref{eq10.013} \eqref{eq10.015} and \eqref{eq10.017}, we have
\begin{eqnarray}\label{eq10.016}
mes \bigcup_{|k|\leq K_{m}}\bigcup_{i=1}^{\infty}\bigcup_{j=1}^{\infty}Q^{(m)}_{kij}
\leq C\gamma_{m}^{1/3}.
 \end{eqnarray}
 Let
 $$\Pi^{+-}_{m+1}=\Pi_{m}\backslash
 \bigcup_{|k|\leq K_{m}}\bigcup_{i,j=1}^{\infty}Q^{(m)}_{kij}.$$
 Then we have proved the following Lemma \ref{lem10.1}.

\begin{lemma}\label{lem10.1}
$$mes\Pi^{+-}_{m+1}\geq mes \Pi_{m}-C\gamma_{m}^{1/3}.$$
\end{lemma}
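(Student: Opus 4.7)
The plan is to estimate directly the measure of the union $\bigcup_{|k|\le K_m}\bigcup_{i,j\ge 1} Q^{(m)}_{kij}$ that is excised from $\Pi_m$ to produce $\Pi^{+-}_{m+1}$, and show it is bounded by $C\gamma_m^{1/3}$. I will split the analysis according to the relative sizes of the indices $i,j,k$, using the asymptotic $\lambda_i^{(m)}=i+\sum_l \varepsilon_l\mu_i^{(l)}= i+O(\varepsilon_0/i)$ from assumption $(A1)_\nu$ so that the resonance denominator is essentially $-\langle k,\omega\rangle + (i-j)$ up to a small error.

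First I would dispose of the diagonal case $i=j$: here necessarily $k\ne 0$, and $Q^{(m)}_{kii}$ forces $|\langle k,\omega_0\rangle\tau|<\gamma_m/A_k$; since $\tau\ge 1$, $A_k\ge |k|^{2n+3}$ and $\omega_0$ is Diophantine with constant $\gamma\gg\gamma_m$, this condition is vacuous, so these sets have measure zero. Next, for $i\ne j$, I would argue that a non-empty $Q^{(m)}_{kij}$ forces $|i-j|\le C|k|$: any $\tau$ in the set satisfies $|\langle k,\omega\rangle\tau|\ge |\lambda_i^{(m)}-\lambda_j^{(m)}|-(|i-j|+1)\gamma_m/A_k\ge \tfrac14|i-j|$, which is the content of \eqref{eq10.07}. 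This restricts the effective range of summation over $(i,j)$ at fixed $k$.

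The heart of the estimate is then a two-regime split, controlled by a threshold $i_0=j_0=|k|^{n+1}\gamma_m^{-1/3}$. For the \emph{large} regime $i\ge i_0,\,j\ge j_0$, I replace $\lambda_i^{(m)}-\lambda_j^{(m)}$ by $l:=i-j$ at the cost of adding $C\varepsilon_0/i_0+C\varepsilon_0/j_0$ to the threshold, landing in the simpler resonance set $\widetilde Q_{kl}$. The derivative estimate $|\frac{d}{d\tau}(-\langle k,\omega_0\rangle+l/\tau)|=|l|/\tau^2\ge|l|/4$ from \eqref{eq*} gives $\mathrm{mes}\,\widetilde Q_{kl}\le\frac{8}{|l|}\bigl(\frac{(|l|+1)\gamma_m}{A_k}+\frac{C\varepsilon_0}{i_0}+\frac{C\varepsilon_0}{j_0}\bigr)$; summing over $1\le |l|\le C|k|$ and then over $|k|\le K_m$ yields $\sum_k C\gamma_m^{1/3}\varepsilon_0\log|k|/|k|^{n+1}\le C\gamma_m^{1/3}$, the choice of $i_0$ being dictated precisely by the need to balance the two contributions in that bound. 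For the \emph{small} regime $i\le i_0$ or $j\le j_0$ (still $|i-j|\le C|k|$), I use instead the derivative of the original resonance divided by $\tau$, which is bounded below by $(|i-j|+1)/16$, giving each $Q^{(m)}_{kij}$ measure $\le C\gamma_m/A_k$; multiplying by the cardinality $O(i_0|k|)$ of admissible pairs and inserting the value of $i_0$ gives $C\gamma_m^{2/3}/|k|^{n+1}$, which is again summable over $k$.

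The main potential obstacle — and the reason the proof works at all — is the calibration of the threshold $i_0=|k|^{n+1}\gamma_m^{-1/3}$: it must be large enough that $\varepsilon_0/i_0$ is negligible compared to $\gamma_m$ in the large regime, yet small enough that there are few index pairs to enumerate in the small regime. The power $1/3$ in the excess $C\gamma_m^{1/3}$ is exactly the exponent produced by this balancing. Once both regimes are summed and combined with $\mathrm{mes}\,Q_{kii}^{(m)}=0$, the resulting bound $\mathrm{mes}\bigcup_{|k|\le K_m,i,j}Q^{(m)}_{kij}\le C\gamma_m^{1/3}$ from \eqref{eq10.016} yields immediately $\mathrm{mes}\,\Pi^{+-}_{m+1}\ge\mathrm{mes}\,\Pi_m-C\gamma_m^{1/3}$, which is the conclusion of Lemma \ref{lem10.1}.
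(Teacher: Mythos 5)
Your proposal is correct and follows essentially the same route as the paper: measure zero for the diagonal case via the Diophantine condition, the restriction $|i-j|\le C|k|$ from non-emptiness, the two-regime split at the threshold $i_0=j_0=|k|^{n+1}\gamma_m^{-1/3}$, the comparison with $\widetilde Q_{kl}$ in the large regime, and the counting argument with the derivative bound $(|i-j|+1)/16$ in the small regime. The balancing of the two contributions that produces the exponent $1/3$ is exactly the paper's calculation.
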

\section{Proof of Theorems}
Let
$$\Pi_{\infty}=\bigcap_{m=1}^{\infty} \Pi_{m},$$
and $$\Psi_{\infty}=\lim_{m\rightarrow \infty}\Psi_{0}\circ\Psi_{1}\circ \cdots \circ \Psi_{m}.$$
By \eqref{eq33} and \eqref{eq33H}, one has
$$\Psi_{\infty}: \mathbb{T}^{n}\times \Pi_{\infty}\rightarrow \mathbb{T}^{n}\times \Pi_{\infty},$$
$$||\Psi_{\infty}-id||\leq \varepsilon_{0}^{1/2},$$
and, by \eqref{eq34},
$$H_{\infty}=H \circ \Psi_{\infty}=\sum_{j=1}^{\infty}\lambda_{j}^{\infty}Z_{j}\overline{Z}_{j},$$
where
$$\lambda_{j}^{\infty}=\lim_{m\rightarrow \infty}\lambda_{j}^{(m)}.$$
By \eqref{eq28} and \eqref{eq29}, the limit $\lambda_{j}^{\infty}$ does exists and
$$\lambda_{j}^{\infty}=j+O (\frac{\varepsilon_{0}}{j}):=\sqrt{j^2+\xi_j}.$$
Introduce a transformation $Z=(Z_j\in\mathbb C:\, j\ge 1)\mapsto \, v(t,x)$ by
\[v(t,x)=\sum_{j=1}^\infty q_j(t)\, \sin\, j\, x.\]
\begin{equation}\label{eq18}
Z_{j}=\frac{\sqrt{-1}}{\sqrt{2}}(q_{j}+\sqrt{-1}p_{j}),\;\;
\overline{Z}_{j}=\frac{\sqrt{-1}}{\sqrt{2}}(q_{j}-\sqrt{-1}p_{j}).
\end{equation}
Let
\[\Phi=(\mathcal{S}\mathcal{T}\mathcal{G}\Psi_{\infty}\mathcal{G}^{-1}\mathcal{T}^{-1}\mathcal{S}^{-1})^{-1}.\]
Then $\Phi$ is a symplectic transformation and changes \eqref{eq1} subject to \eqref{eq2}  into
\eqref{yuan1.7}.
Also, the transformation $\Phi$ change the wave operator
\[\mathcal L_V:\; \,\mathcal L_V u(t,x)=(\partial_t^2-\partial_x^2+\varepsilon V(\omega\, t,x))\, u(t,x),\quad u(t,-\pi)=u(t,\pi)=0 \] into
\[\mathcal L_M:\; \, \mathcal L_M v(t,x)=(\partial_t^2-\partial_x^2+\varepsilon M_\xi)\, v(t,x),\quad v(t,-\pi)=v(t,\pi)=0, \]
which possesses the property of pure point spectra and zero Lyapunov exponent.

This completes the proof of Theorem \ref{thm1.1}.
\section*{Acknowledgements}
The work was supported in part by National
Nature Science Foundation of China (11601277).

\end{document}